\theoremstyle{plain}
\newtheorem{theorem}{Theorem}[section]
\newtheorem{corollary}[theorem]{Corollary}
\newtheorem{lemma}[theorem]{Lemma}
\newtheorem{prop}[theorem]{Proposition}
\theoremstyle{definition}
\theoremstyle{remark}
\newcommand{\nri}{n\rightarrow\infty}
\newcommand{\bbZ}{\mathbb{Z}}
\newcommand{\bbR}{\mathbb{R}}
\newcommand{\bbC}{\mathbb{C}}
\newcommand{\bbD}{\mathbb{D}}
\newcommand{\bbN}{\mathbb{N}}
\newcommand{\mcn}{\mathcal{N}}
\newcommand{\mcp}{\mathcal{P}}
\newcommand{\mcl}{\mathcal{L}}
\newcommand{\mcm}{\mathcal{M}}
\newcommand{\mcr}{\mathcal{R}}
\newcommand{\mt}{\widetilde{M}}
\newcommand{\bard}{\overline{\bbD}}
\newcommand{\barc}{\overline{\bbC}}
\newcommand{\barg}{\overline{G}}
\newcommand{\eitheta}{e^{i\theta}}
\newcommand{\mutil}{\tilde{\mu}}
\DeclareMathOperator*{\supp}{supp}
\DeclareMathOperator*{\wlim}{w-lim}
\title[]{Ratio Asymptotics, Hessenberg Matrices, and Weak Asymptotic Measures}
\author[]{Brian Simanek}
\date{}
\begin{document}
\maketitle

\begin{abstract}
We discuss the relationship between ratio asymptotics for general orthogonal polynomials and the asymptotics of the associated Bergman shift operator.  More specifically, we consider the case in which a measure is supported on an infinite compact subset of the complex plane.  We show that there is a straightforward connection between the corresponding orthonormal polynomials exhibiting ratio asymptotics and the corresponding Bergman shift operator being asymptotically Toeplitz.  We also discuss a connection to the weak asymptotics of the measures derived from the orthonormal polynomials.
\end{abstract}

\vspace{4mm}

\footnotesize\noindent\textbf{Keywords:} Ratio asymptotics, Weak asymptotic measures, Bergman Shift Operator, Hessenberg matrices

\vspace{2mm}

\noindent\textbf{Mathematics Subject Classification:} Primary 42C05; Secondary 60B10, 15B05

\vspace{2mm}

\normalsize

\section{Introduction}\label{intro}

Let $\mu$ be a finite Borel measure whose support is an infinite and bounded subset of the complex plane $\bbC$.  Given such a measure, one can perform Gram-Schmidt orthogonalization on the sequence $\{1,z,z^2,z^3,\ldots\}$ in the space $L^2(\bbC,\mu)$ to arrive at a sequence of polynomials $\{\varphi_n(z;\mu)\}_{n\geq0}$ satisfying
\[
\int_{\bbC}\varphi_n(z;\mu)\overline{\varphi_m(z;\mu)}d\mu(z)=\delta_{mn},
\]
and normalized so that $\varphi_n$ has positive leading coefficient $\kappa_n$.  The polynomials $\{\varphi_n\}_{n=0}^{\infty}$ are called the orthonormal polynomials for the measure $\mu$.  One is also often lead to consider the monic orthogonal polynomials, which we denote by $\Phi_n(z;\mu)=\kappa_n^{-1}\varphi_n(z;\mu)$.  The polynomial $\Phi_n(z;\mu)$ is a monic polynomial of degree exactly $n$ and also satisfies an orthogonality condition.

One can study the asymptotics of the polynomials $\{\varphi_n\}_{n\in\bbN}$ in several ways.  The most general asymptotic behavior that can be discerned is called \textit{root asymptotics}, which occurs precisely when the limit
\[
\lim_{\nri}|\varphi_n(z)|^{1/n}
\]
exists for appropriate values of $z$.  More refined estimates are necessary to discern \textit{ratio asymptotics}, which concern the existence of
\begin{align}\label{ratdefn}
\lim_{\nri}\frac{\varphi_n(z;\mu)}{\varphi_{n+1}(z;\mu)}
\end{align}
for appropriate values of $z$.  Many examples of measures whose orthonormal polynomials exhibit ratio asymptotics can be found in \cite{SimBlob,SimaRat,Suetin}.

One usually obtains root and ratio asymptotics for $z$ outside the polynomial convex hull of the support of the measure.  It is often more difficult to investigate the behavior of the orthonormal polynomials inside the support of the measure.  One common way to approach this problem is to look at weak limits of the measures $\{|\varphi_n(z;\mu)|^2d\mu(z)\}_{n\in\bbN}$.  The polynomial $\Phi_n(\cdot;\mu)$ satisfies the \textit{extremal property}:
\[
\|\Phi_n(\cdot;\mu)\|_{L^2(\mu)}=\inf\{\|Q\|_{L^2(\mu)}:Q(z)=z^n+\mbox{ lower order terms}\}.
\]
Therefore, one expects the polynomial $\Phi_n(z;\mu)$ to be small where the measure $\mu$ is dense and larger where the measure $\mu$ is sparse (to the extent this is possible).  Therefore, an investigation of the measures $\{|\varphi_n(z;\mu)|^2d\mu(z)\}_{n\in\bbN}$ can be thought of as an examination of how well the orthonormal polynomials ``smooth out" the measure $\mu$. The collection of weak limits of the measures $\{|\varphi_n(z;\mu)|^2d\mu(z)\}_{n\in\bbN}$ will be called the \textit{weak asymptotic measures} of the measure $\mu$.

There have been many results investigating both ratio asymptotics and the weak asymptotics of the measures $\{|\varphi_n(z;\mu)|^2d\mu(z)\}_{n\in\bbN}$ in a variety of settings (see \cite{SimaRat,SimonWeak,OPUC1,OPUC2}), and in every well-studied case it is known that the existence of the limit (\ref{ratdefn}) implies the existence of the weak limit
\begin{align}\label{wlimdefn}
\wlim_{\nri}|\varphi_n(z;\mu)|^2d\mu(z).
\end{align}
We will show that this holds true in a much more general context (see Theorem \ref{eqo} below).  Before we can establish this result, we must discuss another problem that we will investigate. 

Closely related to the polynomial $\Phi_n$ is a linear operator called the \textit{Bergman shift operator}.  To define this operator, let $\mcm:L^2(\mu)\rightarrow L^2(\mu)$ be the map given by $(\mcm f)(z)=zf(z)$.  Let $\mcp$ be the closure of the span of the polynomials inside $L^2(\mu)$.  It is easy to see that $\mcm$ maps $\mcp$ to itself and that the restricted map has matrix representation
\[
M=\begin{pmatrix}
M_{11} & M_{12} & M_{13} & M_{14} & \cdots\\
M_{21} & M_{22} & M_{23} & M_{24} & \cdots\\
0 & M_{32} & M_{33} & M_{34} & \cdots\\
0 & 0 & M_{43} & M_{44} & \cdots\\
0 & 0 & 0 & M_{54} & \cdots\\
\vdots & \vdots & \vdots & \vdots & \ddots
\end{pmatrix}
\]
in terms of the orthonormal basis given by the orthonormal polynomials (see \cite{Shift}).  Indeed, since $M_{jk}=\langle z\varphi_{k-1},\varphi_{j-1}\rangle$, it is easy to see that $M_{jk}=0$ if $j>k+1$.  Thus, the operator $\mcm$ determines a Hessenberg matrix $M$.  The relationship between $\Phi_n$ and the matrix $M$ is the following well-known result, which is contained in Proposition 2.2 in \cite{WeakCD}.

\begin{prop}\label{det}%[Simon, \cite{WeakCD}]
Let $\pi_n$ be the projection onto the $n$-dimensional subspace given by the span of $\{1,z,\ldots,z^{n-1}\}$ inside $\mcp$.  The polynomial $\Phi_n(\mu)$ and the matrix $M$ are related by
\begin{align}\label{keydet}
\Phi_n(z;\mu)=\det\left(z-\pi_n M\pi_n\right).
\end{align}
\end{prop}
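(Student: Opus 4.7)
The plan is to compute $\det(z - \pi_n M \pi_n)$ by changing basis on the $n$-dimensional subspace $\pi_n\mcp = \operatorname{span}\{1, z, \ldots, z^{n-1}\}$. Since the determinant of an endomorphism is independent of the chosen basis, I am free to replace the orthonormal basis $\{\varphi_0, \ldots, \varphi_{n-1}\}$ in which $M$ is originally expressed by the monomial basis $\{1, z, \ldots, z^{n-1}\}$, in which the resulting matrix turns out to be a companion matrix. The key ingredient from the theory of orthogonal polynomials is a consequence of the extremal property quoted above: because $\Phi_n(z;\mu)$ is characterized among monic polynomials of degree $n$ by being $L^2(\mu)$-orthogonal to every polynomial of strictly lower degree, the orthogonal projection of $z^n$ onto $\pi_n\mcp$ is
\[
\pi_n z^n = z^n - \Phi_n(z;\mu).
\]

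With this identity in hand, I compute the action of $T := \pi_n M \pi_n$, viewed as an endomorphism of $\pi_n\mcp$, on the monomial basis. For $0 \leq k \leq n-2$, the polynomial $Mz^k = z^{k+1}$ already lies in $\pi_n\mcp$, so $T(z^k) = z^{k+1}$. For $k = n-1$, the identity above yields $T(z^{n-1}) = \pi_n z^n = -a_0 - a_1 z - \cdots - a_{n-1} z^{n-1}$, where I have written $\Phi_n(z;\mu) = z^n + a_{n-1} z^{n-1} + \cdots + a_0$. Assembling these columns, the matrix of $T$ in the monomial basis is precisely the companion matrix of $\Phi_n(z;\mu)$.

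To finish, I invoke the standard fact that the characteristic polynomial of the companion matrix of a monic polynomial $p$ equals $p$ itself (verified by a one-line cofactor expansion along the first row, or by induction). This yields $\det(z - \pi_n M \pi_n) = \Phi_n(z;\mu)$, as desired. The only substantive step in the argument is the extremal identity $\pi_n z^n = z^n - \Phi_n(z;\mu)$, which follows immediately from the quoted extremal property by writing an arbitrary monic degree-$n$ polynomial as $z^n - R$ with $R \in \pi_n\mcp$ and recognizing the minimizer as the orthogonal projection; the rest is routine linear algebra, so no serious obstacle is expected.
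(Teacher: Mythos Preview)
Your proof is correct. The paper does not actually give a proof of this proposition; it simply cites it as a well-known result contained in Proposition~2.2 of \cite{WeakCD}. Your argument---changing to the monomial basis so that $\pi_n M\pi_n$ becomes the companion matrix of $\Phi_n$, using the orthogonality of $\Phi_n$ to lower-degree polynomials to identify $\pi_n z^n = z^n - \Phi_n$---is a clean, self-contained proof and is one of the standard routes to this identity.
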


One is often interested in studying properties of the matrix $M$.  Of particular interest is the asymptotics of the diagonals of the matrix $M$; that is, one is interested in understanding
\begin{align}\label{intint}
\lim_{\nri}M_{n-j,n}=\lim_{\nri}\int_{\bbC}\overline{\varphi_{n-j-1}(z;\mu)}z\varphi_{n-1}(z;\mu)d\mu(z),\qquad j\in\bbZ
\end{align}
or even knowing if this limit exists.  Substantial results along this line of inquiry were obtained in \cite{Shift}.  Before we can explain these results, we have to define some additional notation.

Given a bounded region $G$ in the complex plane whose complement is simply connected in the extended complex plane, there is a unique conformal bijection $\phi_G$ that maps $\barc\setminus\barg$ to $\barc\setminus\bard$ satisfying $\phi_G(\infty)=\infty$ and $\phi_G'(\infty)>0$.  Let us denote the inverse to this map by $\psi_G$ and write the Laurent expansion of $\psi_G$ around infinity as
\begin{align}\label{psiexpd}
\psi_G(w)=b_{-1}w+b_0+\frac{b_1}{w}+\frac{b_2}{w^2}+\cdots,
\end{align}
where each $b_i\in\bbC$ and $b_{-1}>0$.  It is very often the case that if $\partial G\subseteq\supp(\mu)$, then the ratio $\varphi_n(z;\mu)\varphi_{n-1}(z;\mu)^{-1}$ approaches $\phi_G(z)$ when $|z|$ is sufficiently large (see \cite{SimBlob,Suetin} for examples).  Furthermore, it is often the case that the probability measures $\{|\varphi_n|^2d\mu\}_{n\in\bbN}$ converge weakly to the equilibrium measure of $\barg$ as $\nri$ (see \cite{SimBlob} for examples).  Therefore, in such cases one expects the integral in (\ref{intint}) to approach
\[
\int_{\partial G}z\phi_G(z)^jd\omega_{\textrm{eq},\barg}(z)=\int_0^{2\pi}\psi_G(\eitheta)e^{ij\theta}\frac{d\theta}{2\pi}=b_j,
\]
where $\omega_{\textrm{eq},\barg}$ is the equilibrium measure for $\barg$.  Indeed this was shown to be the case in the setting considered in \cite{Shift}.

This heuristic argument fails for several reasons.  Most notably, when one has ratio asymptotic results, it is usually the case that one obtains convergence outside the support of the measure (see for example \cite{SimaRat}).  However, for the above argument to work, one needs convergence inside the support of the measure.  Sometimes this convergence does hold (as in the case of area measure on the annulus $\{z:1/2<|z|<1\}$; see also \cite{MDPoly}), but quite often it is true that examining $\varphi_n\varphi_{n-1}^{-1}$ does not even make sense inside the support of the measure because $\varphi_{n-1}$ can vanish there.  However, it is shown in \cite{Shift} that if the region $G$ satisfies some mild smoothness conditions and if $\mu$ is area measure on $G$, then $M_{n-j,n}\rightarrow b_j$ as $\nri$ for every $j\in\bbN$.  The results of \cite{Shift} also include estimates on the rate of convergence.

Our goal here is to consider more general measures than those considered in \cite{Shift} and try to deduce comparable results.  As is often the case, this added level of generality will come at the expense of some precision in our results.  As an added bonus, our investigation of this problem yields a result concerning the moments of the weak asymptotic measures.  The main result is the following:

\begin{theorem}\label{eqo}
Let $\mu$ be a compactly supported and finite measure.  The following statements are equivalent:
\begin{enumerate}
\item  There exists a positive real number $R$ and a function $f(z)$ analytic in $\{z:R<|z|\leq\infty\}$ such that
\begin{align}\label{fdef}
\lim_{\nri}\frac{\Phi_n(z;\mu)}{\Phi_{n+1}(z;\mu)}=f(z),\qquad R<|z|\leq\infty.
\end{align}
\item  For every $j\in\bbN_0$, the following limit exists:
\begin{align}\label{kappa2}
\lim_{\nri}\frac{\kappa_{n-1-j}}{\kappa_{n-1}}M_{n-j,n}.
\end{align}
\end{enumerate}
If either of the above two conditions are satisfied, then for every $j\in\bbN_0$ the limit
\begin{align}\label{weak1}
\lim_{\nri}\int z^j|\varphi_n(z;\mu)|^2d\mu(z)%=\int z^jd\gamma(z).
\end{align}
exists and is uniquely determined by the limits in $(\ref{fdef})$ or $(\ref{kappa2})$.
\end{theorem}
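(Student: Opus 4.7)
The plan is to reduce all three assertions of the theorem to an analysis of the Laurent expansion at $\infty$ of a long recursion satisfied by the monic polynomials. Starting from the Hessenberg identity $z\varphi_{n-1}=\sum_{j=1}^{n+1}M_{jn}\varphi_{j-1}$, using $M_{n+1,n}=\kappa_{n-1}/\kappa_n$, and converting to monic polynomials via $\varphi_k=\kappa_k\Phi_k$, I obtain
\[
\Phi_n(z;\mu)\;=\;(z-\alpha_0^{(n)})\Phi_{n-1}(z;\mu)-\sum_{i=1}^{n-1}\alpha_i^{(n)}\Phi_{n-1-i}(z;\mu),
\]
where $\alpha_i^{(n)}:=\tfrac{\kappa_{n-1-i}}{\kappa_{n-1}}M_{n-i,n}$ is precisely the sequence in (\ref{kappa2}). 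This single recursion, together with the matrix identity $\Phi_n/\Phi_{n+1}=[(zI-\pi_{n+1}M\pi_{n+1})^{-1}]_{n+1,n+1}$ (immediate from Proposition \ref{det} and Cramer's rule), will power everything that follows.

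For (a)$\Rightarrow$(b), I would divide the recursion (with $n$ replaced by $n+1$) by $\Phi_n$ and invoke the telescoping identity $\Phi_{n-i}/\Phi_n=\prod_{k=1}^{i}(\Phi_{n-k}/\Phi_{n-k+1})$. Under (a) each telescoped product converges to $f^i$ on $\{|z|>R\}$, so every Laurent coefficient at $\infty$ of $\Phi_{n+1}/\Phi_n$ converges. Extracting the $z^{-\ell}$ coefficient of the recursion gives the triangular identity
\[
[z^{-\ell}](\Phi_{n+1}/\Phi_n)\;=\;-\alpha_\ell^{(n+1)}\;-\;\sum_{i=1}^{\ell-1}\alpha_i^{(n+1)}\bigl[z^{-(\ell-i)}\bigr](\Phi_{n-i}/\Phi_n),
\]
and inducting on $\ell$ forces each $\alpha_\ell^{(n+1)}$ to converge, yielding (b).

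For (b)$\Rightarrow$(a), I would run the same identity in reverse: assuming (b) and inductively the convergence of lower-order Laurent coefficients of the various ratios $\Phi_m/\Phi_{m+1}$ for $m$ near $n$, each $z^{-\ell}$ coefficient of $\Phi_n/\Phi_{n+1}$ converges. The delicate step is upgrading this coefficientwise statement to honest uniform convergence of analytic functions. For this I would use the matrix identity recorded above: since $\mcm$ is multiplication by $z$ on $\mcp$, one has $\|\pi_{n+1}M\pi_{n+1}\|\leq\|\mcm\|\leq R_0:=\sup_{z\in\supp\mu}|z|$, so the standard resolvent bound gives the $n$-uniform estimate $|\Phi_n(z;\mu)/\Phi_{n+1}(z;\mu)|\leq(|z|-R_0)^{-1}$ for $|z|>R_0$. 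Consequently $\{\Phi_n/\Phi_{n+1}\}$ is a normal family on $\{|z|>R_0\}$, and coefficientwise Laurent convergence pins down a unique analytic limit $f$; then $\Phi_n/\Phi_{n+1}\to f$ locally uniformly on $\{|z|>R_0\}$, giving (a) with $R=R_0$.

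For the weak-moment conclusion, I would first identify $\int z^j|\varphi_n(z;\mu)|^2\,d\mu(z)=\langle\mcm^j\varphi_n,\varphi_n\rangle=(M^j)_{n+1,n+1}$. Iterating the monic recursion gives $z^j\Phi_n=\sum_{k\geq 0}\beta_{n,k}^{(j)}\Phi_{n+j-k}$ with each $\beta_{n,k}^{(j)}$ a fixed polynomial (independent of $n$ once $n\geq j$) in the symbols $\{\alpha_i^{(m)}:|m-n|\leq j,\ i\leq j\}$, so orthogonality yields $\int z^j|\varphi_n|^2\,d\mu=\beta_{n,j}^{(j)}$. Under (b) each $\alpha_i^{(m)}\to\alpha_i$, hence $\beta_{n,j}^{(j)}$ converges to the value of this same universal polynomial at $(\alpha_0,\ldots,\alpha_j)$, giving both existence and uniqueness. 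I expect the principal obstacles to be the combinatorial bookkeeping needed to identify $\beta_{n,j}^{(j)}$ as a universal polynomial in only finitely many $\alpha$'s, and the normal-family step in (b)$\Rightarrow$(a), where the resolvent bound is the essential ingredient converting formal Laurent convergence into honest convergence of analytic functions on a common annulus.
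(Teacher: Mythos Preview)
Your approach is correct and closely parallels the paper's, though the execution differs. Both arguments hinge on recognising that the Laurent coefficients of $\Phi_{n-1}/\Phi_n$ at infinity are universal polynomials in the quantities $\alpha_i^{(m)}=\tfrac{\kappa_{m-1-i}}{\kappa_{m-1}}M_{m-i,m}$, and both invoke the resolvent bound to upgrade coefficientwise convergence to locally uniform convergence. The paper proceeds via Cramer's rule to get $\Phi_{n-1}/\Phi_n=\sum_{j\ge0}((\pi_nM\pi_n)^j)_{n,n}\,z^{-j-1}$, then expands $((\pi_nM\pi_n)^{k+1})_{n,n}$ as a sum over Hessenberg-admissible index paths and uses a combinatorial subchain decomposition (its Lemma~\ref{repeat}) to factor every path-term into a product of $\alpha$'s; the induction on $k$ isolates $\alpha_k^{(n)}$ as the sole term not already handled. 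You instead exploit the long recursion $\Phi_n=(z-\alpha_0^{(n)})\Phi_{n-1}-\sum_{i\ge1}\alpha_i^{(n)}\Phi_{n-1-i}$ and extract Laurent coefficients directly, which packages the same combinatorics into a triangular system. Your route gives a somewhat more streamlined induction and avoids the explicit path lemma; the paper's route has the advantage that the weak-moment computation (which concerns $(M^j)_{n+1,n+1}$, where paths may rise above level $n$) is visibly identical in structure to the (a)$\Leftrightarrow$(b) argument, whereas in your formulation that parallelism is buried in the iterated recursion.

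Two small points. First, your displayed triangular identity has an index slip: the right-hand side should carry $[z^{-\ell}](\Phi_{n-i}/\Phi_n)$, not $[z^{-(\ell-i)}]$. Second, in (b)$\Rightarrow$(a) the phrase ``run the same identity in reverse'' needs one more observation than you record: the $i=1$ term on the right involves $[z^{-\ell}](\Phi_{n-1}/\Phi_n)=[z^{-\ell}]s_{n-1}^{-1}$ with $s_m:=\Phi_m/\Phi_{m-1}$, which at first sight looks circular. The induction closes because formal inversion shows $[z^{-\ell}]s_m^{-1}$ depends only on $[z^{-q}]s_m$ for $q\le\ell-2$, so the system really is triangular. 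With that remark your argument is complete.
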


\noindent\textit{Remark.}  We will see later that the converse to the last part of Theorem \ref{eqo} is false by appealing to examples in cases where the measure $\mu$ is supported on the unit circle or the real line.

\vspace{2mm}

Theorem \ref{eqo} admits many applications and will in fact allow us to obtain new proofs of some previously known facts concerning ratio asymptotics and weak asymptotic measures.  Our main emphasis will be on new results, but we will also mention some of the known results that follow easily from Theorem \ref{eqo}.

Our first corollary of Theorem \ref{eqo} concerns ratio asymptotics of the orthonormal polynomials and the asymptotic structure of the matrix $M$ along its diagonals.  Let us denote by $\mcr$ the right shift operator on $\ell^2(\bbN)$ and by $\mcl$ the left shift operator on $\ell^2(\bbN)$.  Following the notation in \cite{Feintuch}, we will say that $M$ is \textit{weakly asymptotically Toeplitz} if the sequence of operators
\begin{align}\label{shifts}
\left\{\mcl^nM\mcr^n\right\}_{n\geq0}
\end{align}
converges weakly to a Toeplitz operator $T$ as $\nri$.

With the above terminology, we can state our first corollary.

\begin{corollary}\label{matif}
Let $\mu$ be a compactly supported and finite measure satisfying 
\begin{align}\label{posinf}
\liminf_{\nri}\kappa_n(\mu)\kappa_{n+1}(\mu)^{-1}>0.
\end{align}
The following statements are equivalent:
\begin{enumerate}
\item  There exists a positive real number $R$ and a function $f(z)$ analytic in $\{z:R<|z|\leq\infty\}$ such that
\[
\lim_{\nri}\frac{\varphi_n(z;\mu)}{\varphi_{n+1}(z;\mu)}=f(z),\qquad R<|z|\leq\infty.
\]
\item  The matrix $M$ is weakly asymptotically Toeplitz.
\end{enumerate}
\end{corollary}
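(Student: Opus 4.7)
The plan is to reduce the corollary to Theorem \ref{eqo} by passing between the monic and orthonormal ratios via the identity $\varphi_n/\varphi_{n+1} = (\kappa_n/\kappa_{n+1})\,\Phi_n/\Phi_{n+1}$. Hypothesis (\ref{posinf}) will be invoked precisely to force $\kappa_n/\kappa_{n+1}$ to converge to a strictly positive limit, so that this passage is reversible in both directions.

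First I would unwind the definition of weakly asymptotically Toeplitz. A direct computation gives $\langle \mcl^n M \mcr^n e_i, e_k\rangle = M_{k+n,\,i+n}$, so the sequence in (\ref{shifts}) converges weakly to a Toeplitz operator if and only if $\lim_n M_{n-j,\,n}$ exists for every $j \in \bbZ$. Because $M$ is Hessenberg, only $j \geq -1$ is informative, and the subdiagonal entry satisfies $M_{n+1,\,n} = \kappa_{n-1}/\kappa_n$, so this single diagonal carries exactly the leading-coefficient ratios. For $j \geq 0$, the quantity in Theorem \ref{eqo}$(b)$ differs from $M_{n-j,\,n}$ only by the factor $\kappa_{n-1-j}/\kappa_{n-1}$, and once $\kappa_{n-1}/\kappa_n \to \alpha > 0$ a telescoping product yields $\kappa_{n-1-j}/\kappa_{n-1} \to \alpha^j > 0$, so convergence transfers cleanly between the two factorizations.

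With these pieces in hand, both implications are direct. For $(b) \Rightarrow (a)$: the $j=-1$ diagonal of $M$ converges and (\ref{posinf}) makes the limit $\alpha$ strictly positive; the telescoping step then upgrades the Toeplitz hypothesis to condition $(b)$ of Theorem \ref{eqo}, which delivers $\Phi_n/\Phi_{n+1} \to \tilde f$, and multiplying by $\kappa_n/\kappa_{n+1} \to \alpha$ produces the required ratio asymptotic $f = \alpha \tilde f$. For $(a) \Rightarrow (b)$: exploiting that $\varphi_n/\varphi_{n+1}$ is $O(1/z)$ near infinity with $1/z$-coefficient $\kappa_n/\kappa_{n+1}$, uniform convergence in a neighborhood of $\infty$ forces $\kappa_n/\kappa_{n+1}$ to converge to the $1/z$-coefficient of $f$ at infinity, which is positive by (\ref{posinf}); dividing converts the $\varphi$-ratio convergence into the hypothesis of Theorem \ref{eqo}, the theorem then supplies (\ref{kappa2}), and combining the telescoping argument with the trivial $j=-1$ case completes the Toeplitz condition.

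The one genuinely subtle step is the extraction of $\lim \kappa_n/\kappa_{n+1}$ from the hypothesized analyticity of $f$ at infinity: this is where the clause ``analytic in $\{R < |z| \leq \infty\}$'' is used in an essential way, beyond mere convergence on an annulus, and it is the sole point at which (\ref{posinf}) actually plays a role. Everything else amounts to bookkeeping between the monic and orthonormal normalizations.
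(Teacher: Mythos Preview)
Your proof is correct and is precisely the elaboration the paper has in mind: the paper's own proof reads simply ``Immediate from Theorem~\ref{eqo},'' and you have supplied the bookkeeping---translating between monic and orthonormal ratios via $\kappa_n/\kappa_{n+1}$, and between condition (b) of Theorem~\ref{eqo} and the entrywise Toeplitz condition via the telescoping product $\kappa_{n-1-j}/\kappa_{n-1}\to\alpha^j$---that makes this immediacy explicit. Your identification of where (\ref{posinf}) is genuinely needed (to divide by the limiting ratio in the $(a)\Rightarrow(b)$ direction) is also on point, as the paper's later example with $\alpha_n=(1-1/(n+1))e^{in^2}$ confirms.
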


\noindent\textit{Remark.}  We will have more to say later about the situation when the limit in (\ref{posinf}) is equal to zero.

\begin{proof}
Immediate from Theorem \ref{eqo}.
\end{proof}

Much more is known about orthogonal polynomials on the real line or the unit circle than in more general settings.  This is due in large part to the existence of a finite term recursion relation that exists for the monic orthogonal and orthonormal polynomials in these settings.  These relations give us explicit formulae for the entries of the matrix $M$.  The next two corollaries, which follow easily from Theorem \ref{eqo}, are known results in these more classical settings.  The first of these corollaries is a 2004 result originally due to Simon.  The second is a 2001 result due to Khruschev.

\begin{corollary}[Simon \cite{SimonWeak}]\label{oprlrat}
Let $\mu$ be be a finite measure supported on an infinite yet compact subset of the real line.  The monic orthogonal polynomials for the measure $\mu$ exhibit ratio asymptotics if and only if $\lim_{\nri}\kappa_n\kappa_{n-1}^{-1}$ exists and $\lim_{\nri}M_{n,n}$ exists.
\end{corollary}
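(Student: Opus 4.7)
The plan is to apply Theorem~\ref{eqo} and exploit the degeneration of the Hessenberg matrix $M$ to a tridiagonal (Jacobi) matrix in the OPRL setting. The only real structural input is that tridiagonality, which I would establish first: for $j \geq 2$, using $\bar z = z$ on $\supp(\mu) \subset \bbR$, one has
\[
M_{n-j,n} \;=\; \int z\,\varphi_{n-1}\,\overline{\varphi_{n-j-1}}\,d\mu \;=\; \langle \varphi_{n-1},\, z\varphi_{n-j-1}\rangle \;=\; 0,
\]
because $z\varphi_{n-j-1}$ has degree at most $n-j \leq n-2$ and therefore lies in the span of $\{\varphi_0,\ldots,\varphi_{n-2}\}$, which is orthogonal to $\varphi_{n-1}$.

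Consequently, the limits appearing in condition~(b) of Theorem~\ref{eqo} are identically zero for every $j \geq 2$, and only $j=0$ and $j=1$ carry content. The case $j=0$ is literally the existence of $\lim_{\nri} M_{n,n}$. For $j=1$, I would use the standard Jacobi identification: matching leading coefficients in the three-term recurrence $z\varphi_{n-1} = a_n\varphi_n + b_{n-1}\varphi_{n-1} + a_{n-1}\varphi_{n-2}$ gives $a_n = \kappa_{n-1}/\kappa_n$, so that $M_{n-1,n} = a_{n-1} = \kappa_{n-2}/\kappa_{n-1}$. The condition in \eqref{kappa2} for $j=1$ then reads: $\lim_{\nri}(\kappa_{n-2}/\kappa_{n-1})^2$ exists. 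Since each $\kappa_n > 0$, this is equivalent to the existence of $\lim_{\nri}\kappa_{n-2}/\kappa_{n-1}$ and, reciprocally, of $\lim_{\nri}\kappa_n/\kappa_{n-1}$ in the extended sense.

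With these two translations in hand, the (a)$\Leftrightarrow$(b) part of Theorem~\ref{eqo} yields the corollary. I do not anticipate any substantive obstacle: tridiagonality of $M$ is the one structural ingredient, and the rest is bookkeeping. The only mildly delicate point is the extended-sense reading of the limit of $\kappa_n/\kappa_{n-1}$ in the degenerate case when the off-diagonal Jacobi parameters tend to zero, which is harmless because Theorem~\ref{eqo} requires only existence of the limit, not strict positivity.
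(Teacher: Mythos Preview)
Your proposal is correct and follows essentially the same route as the paper: reduce condition~(b) of Theorem~\ref{eqo} to the cases $j=0,1$ by tridiagonality, then identify the $j=1$ quantity as $(\kappa_{n-2}/\kappa_{n-1})^2$. The only cosmetic difference is that the paper obtains $M_{n-1,n}=\kappa_{n-2}/\kappa_{n-1}$ via self-adjointness of $M$ and the known subdiagonal entry $M_{n,n-1}$, whereas you read it off the three-term recurrence directly---these are equivalent computations.
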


\begin{corollary}[Khruschev \cite{Kr}]\label{opucweak}
Let $\mu$ be a probability measure supported on an infinite subset of the unit circle.  If for all $k\in\bbN$ it holds that $\lim_{\nri}\Phi_n(0;\mu)\Phi_{n+k}(0;\mu)=0$, then
\[
\wlim_{\nri}|\varphi_n(z;\mu)|^2d\mu(z)=\frac{d|z|}{2\pi}.
\]
\end{corollary}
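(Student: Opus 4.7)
The plan is to apply Theorem~\ref{eqo} by verifying its condition~(2) under Khrushchev's hypothesis, and then to identify the resulting moment limits by comparison with the reference measure $\mu_0 = d|z|/(2\pi)$.  Using the Szegő recursions $z\Phi_{n-1} = \Phi_n + \overline{\alpha_{n-1}}\Phi_{n-1}^*$ and $\Phi_{k+1}^* = \Phi_k^* - \alpha_k z\Phi_k$, with Verblunsky coefficients $\alpha_k := -\overline{\Phi_{k+1}(0)}$, one obtains
\[
\frac{\kappa_{n-j-1}}{\kappa_{n-1}}\,M_{n-j,n}\;=\;\overline{\alpha_{n-1}}\,\frac{\langle\Phi_{n-1}^*,\Phi_{n-j-1}\rangle}{\|\Phi_{n-j-1}\|^2},
\]
the $\Phi_n$ contribution dropping out by orthogonality.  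Iterating the dual recursion and applying orthogonality, a direct (though intricate) calculation reduces this to the closed form
\[
\frac{\kappa_{n-j-1}}{\kappa_{n-1}}\,M_{n-j,n}\;=\;-\overline{\alpha_{n-1}}\,\alpha_{n-j-2}\prod_{l=n-j-1}^{n-2}\rho_l^{2},
\]
where $\rho_l = \sqrt{1-|\alpha_l|^2}\in[0,1]$.  Applying Khrushchev's hypothesis with $k = j+1$ gives $\alpha_{n-j-2}\,\alpha_{n-1}\to 0$, so $(\kappa_{n-j-1}/\kappa_{n-1})M_{n-j,n}\to 0$ for every $j\in\bbN_0$, verifying condition~(2) of Theorem~\ref{eqo} with every limit equal to zero.

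Theorem~\ref{eqo} then yields the existence of $\lim_{n\to\infty}\int z^j|\varphi_n|^2\,d\mu$ for each $j\in\bbN_0$ and asserts that this limit is uniquely determined by the limits in (\ref{kappa2}).  For the reference measure $\mu_0 = d|z|/(2\pi)$, all Verblunsky coefficients vanish, $\varphi_n(z;\mu_0) = z^n$, every quantity in (\ref{kappa2}) is identically zero, and $\int z^j|\varphi_n(\cdot;\mu_0)|^2\,d\mu_0 = \delta_{j,0}$.  Since our $\mu$ produces the same (zero) limits in (\ref{kappa2}), the uniqueness assertion forces $\lim_{n\to\infty}\int z^j|\varphi_n|^2\,d\mu = \delta_{j,0}$ for every $j\in\bbN_0$.

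To promote this moment convergence to weak convergence, note that $\supp\mu\subseteq\{|z|=1\}$ and $\bar z = 1/z$ there, so $\int z^{-j}|\varphi_n|^2\,d\mu = \overline{\int z^j|\varphi_n|^2\,d\mu}\to\delta_{j,0}$ for $j\geq 1$.  All Fourier coefficients of the probability measures $|\varphi_n|^2\,d\mu$ therefore converge to those of $d|z|/(2\pi)$, and weak convergence on the unit circle follows by Stone-Weierstrass.  The main obstacle is establishing the closed-form identity above: the individual terms from the iterated dual-recursion expansion do not obviously collapse to a single product, and verifying this collapse requires careful bookkeeping using the identity $1 - |\alpha_l|^2 = \rho_l^2$.
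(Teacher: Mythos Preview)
Your proof is correct and takes essentially the same route as the paper: verify condition (b) of Theorem~\ref{eqo} via the closed-form GGT entry $\frac{\kappa_{n-j-1}}{\kappa_{n-1}}M_{n-j,n}=-\overline{\alpha_{n-1}}\,\alpha_{n-j-2}\prod_{l=n-j-1}^{n-2}\rho_l^{2}$, conclude that all moment limits vanish, and pass to weak convergence on $\partial\bbD$ by Stone--Weierstrass. The only cosmetic differences are that the paper quotes this closed form from \cite[Equation (4.1.6)]{OPUC1} rather than deriving it from the Szeg\H{o} recursions, and reads off the zero moment limits directly from the proof of Theorem~\ref{eqo} rather than via your (clean) comparison with the reference measure $\mu_0=d|z|/(2\pi)$.
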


Theorem \ref{eqo} and Corollary \ref{matif} naturally lead us to the following question: if for a given measure $\mu$ the corresponding orthonormal polynomials exhibit ratio asymptotics, what  does the limiting Toeplitz matrix of Corollary \ref{matif} look like and what are the limits of the moments in (\ref{weak1})?  This question motivates many of the results that follow.

We begin by relating the limiting function appearing in the ratio asymptotics to the limiting Toeplitz matrix in Corollary \ref{matif}.  We will continue to consider the case in which $\liminf_{\nri}\kappa_n\kappa_{n+1}^{-1}>0$.  In this case, the limiting function $f$ in Corollary \ref{matif} must have a Laurent coefficient of $z^{-1}$ that is nonzero.  Therefore, $f$ is injective on the set $\{z:|z|>R\}$ for some $R>0$.  It follows that there is a function $g(z)$ satisfying
\[
\left(\frac{1}{f(g(z))}\right)=z=g\left(\frac{1}{f(z)}\right),\qquad|z|>R.
\]
Let us write
\begin{align}\label{gdef}
g(z)=\beta_{-1}z+\beta_0+\frac{\beta_1}{z}+\frac{\beta_2}{z^2}+\cdots
\end{align}
With this notation, we can state our next result.

\begin{theorem}\label{whatlimit}
Let $\mu$ be a finite measure with compact support so that condition (a) in Corollary \ref{matif} is satisfied and (\ref{posinf}) holds.  With $g$ defined as in (\ref{gdef}), we have
\[
\lim_{\nri}M_{n-j,n}=\beta_j
\]
for all $j\geq-1$.
\end{theorem}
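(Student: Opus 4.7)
The plan is to extract a functional equation relating $f$ to the diagonal limits $\alpha_j := \lim_n M_{n-j,n}$ from the Hessenberg recurrence satisfied by the $\varphi_n$, and then identify its inverse as the Laurent expansion of $g$.

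I would first handle the case $j = -1$: the subdiagonal entry satisfies $M_{n+1,n} = \kappa_{n-1}/\kappa_n$, and condition (a) of Corollary \ref{matif} combined with hypothesis (\ref{posinf}) forces this quantity to converge to the coefficient $c$ of $z^{-1}$ in the Laurent expansion of $f$ at infinity. Since $1/f(z) = z/c + O(1)$ near infinity, the compositional inverse $g$ of $1/f$ has leading coefficient $c$, so $\beta_{-1} = c = \alpha_{-1}$. For $j \geq 0$, existence of $\alpha_j$ follows from the limit (\ref{kappa2}) provided by Theorem \ref{eqo} together with $\kappa_{n-1-j}/\kappa_{n-1} \to c^j > 0$.

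For the identification $\alpha_j = \beta_j$ when $j \geq 0$, I would start from the Hessenberg recurrence
\begin{equation*}
z\varphi_{n-1}(z) = \sum_{k=0}^{n} M_{k+1,n}\varphi_k(z),
\end{equation*}
divide through by $\varphi_n(z)$, and rewrite $\varphi_k/\varphi_n$ as the telescoping product $\prod_{m=k}^{n-1} R_m$ with $R_m := \varphi_m/\varphi_{m+1}$. Reindexing $i = n-k$ yields
\begin{equation*}
z R_{n-1}(z) = \sum_{i=0}^{n} M_{n-i+1,n}\prod_{m=n-i}^{n-1} R_m(z).
\end{equation*}
Passing to the limit $n \to \infty$ (with $R_m \to f$ and $M_{n-i+1,n} \to \alpha_{i-1}$) would give the functional equation
\begin{equation*}
z f(z) = \sum_{i=0}^{\infty} \alpha_{i-1} f(z)^i.
\end{equation*}
Setting $w = f(z)$, dividing by $w$, and substituting $u = 1/w$ then produces $z = \alpha_{-1} u + \alpha_0 + \sum_{j\geq 1}\alpha_j u^{-j}$, which by the defining relation $g(1/f(z)) = z$ is the Laurent expansion of $g(u)$ near infinity. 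Matching with (\ref{gdef}) yields $\alpha_j = \beta_j$ for all $j \geq -1$.

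The main obstacle is justifying termwise passage to the limit in a sum whose length $n+1$ grows with $n$. My strategy is to show that for $|z| \geq R_0$ with $R_0$ sufficiently large, $|R_m(z)| \leq 1/2$ uniformly in $m$. This bound should follow by combining the locally uniform convergence $R_m \to f$ on $\{R < |z| \leq \infty\}$ (obtainable from the pointwise hypothesis via Vitali's theorem, after a Baire-category argument upgrades pointwise boundedness to local uniform boundedness of $\{R_m\}$) with the decay $|f(z)| = O(|z|^{-1})$ at infinity. The operator-norm bound $|M_{jk}| \leq \|\mcm\|_{\mathrm{op}} \leq \sup_{z\in\supp(\mu)}|z|$ would then dominate the tail of the sum by a convergent geometric series, uniformly in $n$, completing the interchange of limit and sum.
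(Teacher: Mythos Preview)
Your argument is correct and takes a genuinely different route from the paper's. The paper works with the Laurent coefficients $f_k$ of $f$ at infinity and the identity $\lim_n((\pi_nM\pi_n)^k)_{n,n}=f_k$ (established in the proof of Theorem~\ref{eqo}); it then sets up a recursion showing that $f_{k+1}/f_1=(B^k)_{1,1}$ for the Toeplitz matrix $B$ built from the $\beta_j$, and matches this inductively against the expansion of $((\pi_nM\pi_n)^{k+1})_{n,n}$ to peel off $M_{n-k,n}\to\beta_k$ one index at a time. Your approach instead passes to the limit directly in the Hessenberg recurrence to obtain the closed functional equation $zf(z)=\sum_{i\ge0}\alpha_{i-1}f(z)^i$, whose inversion reads off all the $\alpha_j$ at once as the Laurent coefficients of $g$; this is shorter and more transparent. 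One small remark: you do not need Vitali or a Baire-category upgrade for the uniform bound $|R_m(z)|\le 1/2$. Proposition~\ref{normf} (together with $\kappa_m/\kappa_{m+1}=M_{m+2,m+1}\le\|\mcm\|$) already gives $|R_m(z)|\le\|\mcm\|/(|z|-\|\mcm\|)$ directly, which is at most $1/2$ once $|z|\ge 3\|\mcm\|$, and this suffices for the dominated-convergence style tail estimate you describe.
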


Theorem \ref{whatlimit} generalizes some previously known results.  For example, the results in \cite{Shift} established this fact when $\mu$ is area measure on a sufficiently smooth Jordan region.  Additionally, the main result in \cite{EGST} obtains a similar conclusion assuming $\mu$ is supported on a finite union of Jordan arcs, $\barc\setminus\supp(\mu)$ is simply connected, and some additional technical hypotheses.

%\vspace{2mm}

One immediate consequence of Theorem \ref{whatlimit} is Corollary \ref{blobhess}.  If $T$ is a Toeplitz matrix and $T_{i,j}=a_{j-i}$ then we say that the \textit{symbol} of $T$ is the function $F$ where
\[
F(z)=\sum_{j=-\infty}^{\infty}a_jz^{-j}.
\]
The following result is a direct consequence of Theorem \ref{whatlimit} and the results in \cite{SimBlob}.

\begin{corollary}\label{blobhess}
Let $G$ be a Jordan domain with analytic boundary and suppose that $\rho<1$ is fixed so that $\{z:|z|>\rho\}$ is in the domain of $\psi_G$.  Consider the measure $\mutil(r\eitheta)=h(r\eitheta)\left(\nu(\theta)\otimes\tau(r)\right)+\sigma_2(r\eitheta)$ where
\begin{enumerate}
\item $\tau$ is a finite measure on $[\rho,1]$ and $1\in\supp(\tau)$,
\item $h(z)$ is a continuous function on $\bard$ that is non-vanishing in a neighborhood of $\partial\bbD$,
\item $\sigma_2$ is a finite measure carried by $\{z:\rho<|z|\leq1\}$ that satisfies
\[
\lim_{t\rightarrow\infty}\frac{\int|z|^td\sigma_2(z)}{\int|z|^td\tau(z)}=0,
\]
\item $\nu$ is a finite Szeg\H{o} measure on the unit circle.% such that $\nu'(\theta)>0$ Lebesgue almost everywhere,
\end{enumerate}
Let $\mu$ be the measure on $\bbC$ be given by
\[
\mu=(\psi_G)_*\mutil+\sigma_1+\sum_{j=1}^m\alpha_j\delta_{z_j}+\sum_{j=1}^{\ell}\beta_j\delta_{\zeta_j}
\]
where $\supp(\sigma_1)\subseteq G$, $\alpha_j,\beta_j>0$, $z_j\not\in\barg$ for all $j\in\{1,\ldots,m\}$, and $\zeta_j\in\partial G$ for all $j\in\{1,\ldots,\ell\}$.  Then the matrix $M$ is asymptotically Toeplitz and the limiting matrix has symbol $\psi_G(z)$.
\end{corollary}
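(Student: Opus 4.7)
The plan is to deduce Corollary \ref{blobhess} as a direct consequence of Theorem \ref{whatlimit}, using the ratio-asymptotic results from \cite{SimBlob} as the main analytic input. Theorem \ref{whatlimit} requires that (i) the orthonormal polynomials for $\mu$ exhibit ratio asymptotics and (ii) the leading-coefficient condition (\ref{posinf}) holds. I would first verify both of these for the measure $\mu$ constructed in the corollary by appealing to the results of \cite{SimBlob}, which cover precisely this class of measures: a perturbation of a Szeg\H{o}-type measure pushed forward by $\psi_G$, together with a measure carried in $G$ and finitely many point masses. These results yield
\[
\lim_{\nri}\frac{\varphi_n(z;\mu)}{\varphi_{n+1}(z;\mu)}=\frac{1}{\phi_G(z)}
\]
uniformly on a neighborhood of infinity, and also $\kappa_n/\kappa_{n+1}\to b_{-1}=\cpct(\barg)>0$, confirming (\ref{posinf}).

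Having established ratio asymptotics with limit $f(z)=1/\phi_G(z)$, the next step is to identify the function $g$ defined implicitly by (\ref{gdef}). Since $1/f(z)=\phi_G(z)$ and $g$ is the local inverse of $1/f$ at infinity, we have $g=\psi_G$. Comparing the Laurent expansions (\ref{psiexpd}) and (\ref{gdef}) then yields $\beta_j=b_j$ for every $j\geq -1$.

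Applying Theorem \ref{whatlimit} now gives $\lim_{\nri}M_{n-j,n}=b_j$ for every $j\geq-1$, while the Hessenberg structure of $M$ forces $M_{n-j,n}=0$ whenever $j\leq -2$. Since $(\mcl^n M\mcr^n)_{i,k}=M_{n+i,n+k}$, these limits give entrywise convergence of the sequence $\{\mcl^n M\mcr^n\}$ to the Toeplitz matrix $T$ with $T_{i,k}=b_{k-i}$; combined with the uniform bound $\|\mcl^n M\mcr^n\|\leq\|M\|$, this yields weak operator convergence to $T$. Therefore $M$ is (weakly) asymptotically Toeplitz with symbol $\sum_{j\geq-1}b_jz^{-j}=\psi_G(z)$. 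The only nontrivial step in the whole argument is pinpointing the precise statement in \cite{SimBlob} that covers the full class of perturbations allowed here (a Szeg\H{o} measure on the circle tensored with a radial measure, plus a remainder $\sigma_2$ of smaller moment growth, plus interior mass and point masses on $\partial G$); once that ratio-asymptotic input is in hand, the rest of the argument is a formal translation between the ratio-asymptotic and Toeplitz-symbol viewpoints.
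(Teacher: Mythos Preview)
Your proposal is correct and matches the paper's own proof exactly: the paper simply cites \cite{SimBlob} to obtain the ratio asymptotics $\varphi_n/\varphi_{n+1}\to 1/\phi_G$ for this class of measures and then invokes Theorem~\ref{whatlimit}. The additional details you supply (identifying $g=\psi_G$, checking (\ref{posinf}), and passing from entrywise limits to weak operator convergence) are implicit in the paper's two-line argument, so there is no substantive difference.
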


\begin{proof}%[Proof of Corollary \ref{blobhess}]
The results of \cite{SimBlob} imply that if $\mu$ is of the desired form, then when $|z|$ is sufficiently large it holds that
\[
\lim_{\nri}\frac{\varphi_n(z;\mu)}{\varphi_{n+1}(z;\mu)}=\frac{1}{\phi_G(z)},
\]
so result now follows from Theorem \ref{whatlimit}.
\end{proof}

To address the second portion of the aforementioned question, let us focus on the relationship between ratio asymptotics of the monic orthogonal polynomials and the limits of the moments in (\ref{weak1}).  Theorem \ref{eqo} tells us that the weak asymptotic measures all have the same moments whenever the monic orthogonal polynomials exhibit ratio asymptotics.  The next natural step is to calculate these moments.  As Theorem \ref{eqo} suggests, calculating the moments of the weak asymptotic measures is possible when one knows the limiting function of the ratio asymptotics.  Our next result makes this more precise.

\begin{theorem}\label{equib}
Let $\mu$ be a compactly supported and finite measure.
Let $K$ be a compact set bounded by a Jordan curve and let $\phi_{K}$ denote the conformal bijection sending $\barc\setminus K$ to $\barc\setminus\bard$ satisfying $\phi_{K}(\infty)=\infty$ and $\phi_{K}'(\infty)>0$.  If
\begin{align}\label{phigrat}
\lim_{\nri}\frac{\varphi_n(z;\mu)}{\varphi_{n+1}(z;\mu)}=\frac{1}{\phi_{K}(z)},
\end{align}
when $|z|$ is sufficiently large and $\psi_K$ is the inverse to $\phi_K$, then for every $j\in\bbN$ it holds that
\[
\lim_{\nri}\int_{\bbC}z^j|\varphi_n(z;\mu)|^2d\mu(z)=\int_{0}^{2\pi}\psi_K(\eitheta)^j\frac{d\theta}{2\pi}.
\]
%where $\omega_{\barg}$ is the equilibrium measure for $G$.
\end{theorem}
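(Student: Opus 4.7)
The plan is to convert the $j$th moment into a diagonal entry of $M^j$, apply Theorem \ref{whatlimit} to obtain diagonal-wise asymptotics of $M$, and then exploit the Hessenberg structure to pass to the limit.

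First, since $M$ represents multiplication by $z$ in the orthonormal basis $\{\varphi_k\}$, iteration gives $M^j e_{n+1}=z^j\varphi_n$, and thus
\[
\int_{\bbC}z^j|\varphi_n(z;\mu)|^2d\mu(z)=\langle z^j\varphi_n,\varphi_n\rangle=(M^j)_{n+1,n+1}.
\]
Next, I will verify the hypotheses of Theorem \ref{whatlimit}. Since $1/\phi_K(z)\sim b_{-1}/z$ at infinity with $b_{-1}=\cpct(K)>0$, comparison with $\varphi_n/\varphi_{n+1}\sim \kappa_n/(\kappa_{n+1}z)$ yields $\kappa_n/\kappa_{n+1}\to b_{-1}>0$, so (\ref{posinf}) holds. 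In the notation of (\ref{gdef}), $f=1/\phi_K$ gives $1/f=\phi_K$, whence $g=\psi_K$; Theorem \ref{whatlimit} therefore provides $\lim_{\nri}M_{n-i,n}=b_i$ for every $i\geq -1$, where the $b_i$ are the Laurent coefficients of $\psi_K$ from (\ref{psiexpd}).

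The main step is to compute $(M^j)_{n+1,n+1}$ via the Hessenberg structure. Expanding,
\[
(M^j)_{n+1,n+1}=\sum M_{n+1,i_1}M_{i_1,i_2}\cdots M_{i_{j-1},n+1},
\]
the Hessenberg condition $M_{a,b}=0$ for $a>b+1$ forces $i_\ell\geq i_{\ell-1}-1$ at every step (with $i_0=i_j=n+1$), which gives the forward bound $i_\ell\geq n+1-\ell$; the analogous backward reasoning from $i_j=n+1$ gives $i_\ell\leq n+1+(j-\ell)$. Hence every intermediate index lies in $[n+2-j,\,n+j]$, so the sum has finitely many nonzero terms, uniformly in $n$. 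Writing $k_\ell=i_\ell-i_{\ell-1}\geq -1$, the diagonal asymptotics give termwise convergence $M_{i_{\ell-1},i_\ell}\to b_{k_\ell}$, and so
\[
\lim_{\nri}(M^j)_{n+1,n+1}=\sum_{\substack{k_1,\ldots,k_j\geq -1\\ k_1+\cdots+k_j=0}}b_{k_1}b_{k_2}\cdots b_{k_j}.
\]

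The right-hand side is precisely the constant term of the formal Laurent expansion of $\psi_K(z)^j=\bigl(\sum_{k\geq -1}b_kz^{-k}\bigr)^j$. Since $\psi_K$ extends continuously to $\{|z|\geq 1\}$ by Carath\'eodory's theorem and the Laurent series of $\psi_K^j$ converges uniformly on any circle of radius $>1$, this constant Laurent coefficient equals $\frac{1}{2\pi}\int_0^{2\pi}\psi_K(\eitheta)^j\,d\theta$, completing the proof. The main obstacle is the Hessenberg-path bookkeeping: one must carefully verify that the paths are confined to a bounded window around index $n+1$, independent of $n$, so that termwise passage to the limit via Theorem \ref{whatlimit} is legitimate.
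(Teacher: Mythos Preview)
Your proof is correct and follows essentially the same approach as the paper: both identify the moment as a diagonal entry of $M^j$, invoke Theorem \ref{whatlimit} to obtain $M_{n-i,n}\to b_i$, expand $(M^j)_{n,n}$ over Hessenberg-admissible paths, and match the resulting sum $\sum_{k_1+\cdots+k_j=0,\;k_\ell\ge -1} b_{k_1}\cdots b_{k_j}$ with the constant Laurent coefficient of $\psi_K^j$. The paper packages the last identification via the polynomial part $H_j$ of $\psi_K^j$ and the contour integral $H_j(0)=\frac{1}{2\pi i}\int_{|z|=1}\psi_K(z)^j\,\frac{dz}{z}$, whereas you read it off directly as the $z^0$ coefficient and pass from circles $|z|=r>1$ to $|z|=1$ via Carath\'eodory continuity; this is the same computation in slightly different dress.
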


We will prove Theorem \ref{equib} in Section \ref{jordan}.  The next two corollaries are applications of Theorem \ref{equib} that we will prove in Sections \ref{jordan} and \ref{mass} respectively.

\begin{corollary}\label{squareweak}
If $\mu$ is area measure on a bounded and simply-connected region $G$ whose boundary is piecewise analytic without cusps, then
\[
\wlim_{\nri}|\varphi_n(z;\mu)|^2d\mu(z)=d\omega_{\textrm{eq},\barg}(z).
\]
\end{corollary}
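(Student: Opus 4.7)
The plan is to apply Theorem \ref{equib} with $K=\barg$ and then combine the resulting moment information with tightness and a support-localization step to identify the weak limit.

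First I would verify the hypothesis (\ref{phigrat}) for $K=\barg$ when $\mu$ is area measure on $G$. The asymptotic $\varphi_n/\varphi_{n+1}\to 1/\phi_G$ uniformly on compact subsets of $\barc\setminus\barg$ is classical for Bergman polynomials on Jordan domains with piecewise-analytic boundary without cusps (cf.\ \cite{SimBlob,Suetin} and the setting of \cite{Shift}). Theorem \ref{equib} then yields, for every $j\in\bbN$,
\[
\lim_{\nri}\int z^j|\varphi_n|^2\,d\mu=\int_0^{2\pi}\psi_G(\eitheta)^j\,\frac{d\theta}{2\pi}=\int z^j\,d\omega_{\textrm{eq},\barg}(z),
\]
the second equality expressing $\omega_{\textrm{eq},\barg}$ as the pushforward of $d\theta/(2\pi)$ under $\psi_G$. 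Taking complex conjugates handles the $\bar z^j$ moments, and the $j=0$ case is automatic because $|\varphi_n|^2\,d\mu$ is a probability measure. Hence any weak subsequential limit of $\{|\varphi_n|^2\,d\mu\}$ agrees with $\omega_{\textrm{eq},\barg}$ on every harmonic polynomial.

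Since each $|\varphi_n|^2\,d\mu$ is a probability measure supported in the fixed compact set $\barg$, the family is tight, and Prokhorov's theorem gives weakly convergent sub-subsequences with some limit $\nu$ supported in $\barg$. To conclude $\nu=\omega_{\textrm{eq},\barg}$ I would first show $\supp(\nu)\subseteq\partial G$: under the stated smoothness of $\partial G$, classical Bergman strong asymptotics force $\int_K|\varphi_n|^2\,d\mu\to 0$ for every compact $K\subset G$, so no mass survives in the interior. Once $\supp(\nu)\subseteq\partial G$, the density of harmonic polynomials in $C(\partial G)$---a consequence of Walsh's uniform approximation theorem together with solvability of the Dirichlet problem on the Jordan domain $G$---shows that $\nu$ is uniquely determined by its harmonic moments, so $\nu=\omega_{\textrm{eq},\barg}$. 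Every weak subsequential limit thus equals $\omega_{\textrm{eq},\barg}$, yielding the claimed convergence of the full sequence.

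The main obstacle is the support-localization step; this is where the piecewise-analytic, cusp-free hypothesis on $\partial G$ genuinely enters, by way of the Bergman strong asymptotics that push the mass of $|\varphi_n|^2\,d\mu$ onto $\partial G$ in the limit.
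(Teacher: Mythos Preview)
Your proposal is correct and follows essentially the same route as the paper: establish the ratio asymptotics, invoke Theorem~\ref{equib} (the paper packages this together with the moment-determinacy argument on $\partial G$ as Corollary~\ref{equibcor}), and use interior decay of $|\varphi_n|$ to force every weak limit onto $\partial G$. One small caveat on citations: for piecewise-analytic boundaries without cusps, both the ratio asymptotics and the quantitative interior bound $\|\varphi_n\|_{L^\infty(K)}\leq c_K n^{-1/2}$ that drives the support-localization step come from \cite{Corners}, not from \cite{SimBlob,Suetin,Shift}, which address smoother (analytic) boundaries.
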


\begin{corollary}\label{nvyweak}
Let $\mu$ be a measure that can be written as
\[
\mu=\mu_1+\mu_2+\mu_3
\]
where $\mu_1$ satisfies $\mu_1(\bbD)=\mu_1(\bbC)$, $\mu_2$ is a measure on the unit circle of the form $w(\theta)d\theta/2\pi$ where
\[
\int_0^{2\pi}\log(w(\theta))d\theta>-\infty,
\]
and $\mu_3$ is a purely discrete measure supported on $\bbC\setminus\bard$ whose mass points $\{z_j\}_{j\in\bbN}$ satisfy the balschke condition:
\[
\sum_{j=1}^{\infty}|z_j|-1<\infty.
\]
 Then
\[
\wlim_{\nri}|\varphi_n(z;\mu)|^2d\mu(z)=d\omega_{\bard}(z),
\]
where $\omega_{\bard}$ is the equilibrium measure for the unit disk.
\end{corollary}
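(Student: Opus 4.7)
The plan is to apply Theorem \ref{equib} with $K=\bard$ and then promote the resulting moment information to weak convergence. For $K=\bard$ the conformal bijection $\phi_K$ is the identity, so $\psi_K(w)=w$, and Theorem \ref{equib} yields
\[
\lim_{\nri}\int_{\bbC} z^j |\varphi_n(z;\mu)|^2\,d\mu(z) = \int_0^{2\pi} e^{ij\theta}\,\frac{d\theta}{2\pi} = 0,\qquad j\in\bbN,
\]
which together with the normalization $\int|\varphi_n|^2\,d\mu=1$ exhausts the nonnegative holomorphic moments of the equilibrium measure $d\omega_{\bard}=d\theta/(2\pi)$; complex conjugation supplies the corresponding antiholomorphic moments.

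The principal task is therefore to verify the hypothesis (\ref{phigrat}) for $K=\bard$, i.e., that
\[
\lim_{\nri}\frac{\varphi_n(z;\mu)}{\varphi_{n+1}(z;\mu)} = \frac{1}{z},\qquad |z|>1.
\]
For the pure OPUC piece $\mu_2$ this is classical: the Szeg\H{o} condition gives $\varphi_n^*(z;\mu_2)\to D(0)/D(z)$ uniformly on compact subsets of $\bbD$, and combined with the symmetry $\varphi_n(z)=z^n\overline{\varphi_n^*(1/\bar z)}$ this yields the stated ratio asymptotics on $\{|z|>1\}$. To extend from $\mu_2$ to $\mu=\mu_1+\mu_2+\mu_3$ I would invoke OPUC perturbation theory: the Blaschke condition $\sum(|z_j|-1)<\infty$ on the exterior support of $\mu_3$ forces its effect on the orthonormal polynomials to be a convergent Blaschke-product factor whose consecutive ratios tend to $1$ (in the spirit of Khrushchev--Denisov), while the interior piece $\mu_1$ contributes negligibly to the $L^2(\mu)$-norm of the extremal monic polynomial because Szeg\H{o} decay makes the interior $L^2$-mass of $\Phi_n$ dominated by its boundary $L^2$-mass.

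With ratio asymptotics established, every subsequential weak limit $\nu$ of the probability measures $|\varphi_n|^2\,d\mu$ matches $d\omega_{\bard}$ on all holomorphic and antiholomorphic moments. To identify $\nu$ itself it suffices to prove that $\nu$ is supported on $\partial\bbD$, since on the unit circle the identity $\bar z=1/z$ converts mixed moments to holomorphic ones and the equalities $\int z^j\,d\nu=\delta_{j,0}$ then force $\nu=d\theta/(2\pi)$. The exterior contribution from $\mu_3$ vanishes because the bound $\alpha_j|\varphi_n(z_j)|^2\le 1$ combined with the Blaschke summability and standard zero-attraction arguments forces $|\varphi_n(z_j)|^2\to 0$; the interior contribution from $\mu_1$ vanishes by Szeg\H{o}-theoretic bounds on $|\varphi_n|$ on compact subsets of $\bbD$ together with dominated convergence against the finite measure $\mu_1$. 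Uniqueness of the subsequential limit then gives full weak convergence to $d\omega_{\bard}$. The main obstacle lies in the combined perturbation argument for ratio asymptotics: while each constituent of $\mu$ is handled by classical OPUC techniques individually, assembling the three pieces under the precise hypotheses placed on $\mu_1$, $\mu_2$, $\mu_3$ is where the bulk of the technical work resides.
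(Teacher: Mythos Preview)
Your overall strategy is exactly the paper's: establish the ratio asymptotic relation $\varphi_n/\varphi_{n+1}\to 1/z$, show that every weak asymptotic measure is supported on $\partial\bbD$, and then invoke Theorem~\ref{equib} (equivalently Corollary~\ref{equibcor}) to identify the limit with $\omega_{\bard}$.

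The difference is in how the three analytic facts you need are obtained. You attempt to derive them directly: classical Szeg\H{o} asymptotics for $\mu_2$, a Blaschke-product perturbation argument for $\mu_3$, and Szeg\H{o}-type interior decay for $\mu_1$. You correctly identify that assembling these into a rigorous statement for the composite measure $\mu$ is the real work, and indeed your sketches here are not proofs. For instance, the bound $\alpha_j|\varphi_n(z_j)|^2\le 1$ gives only boundedness, not $\varphi_n(z_j)\to 0$; and the interior decay you invoke concerns $\varphi_n(\cdot;\mu_2)$, not $\varphi_n(\cdot;\mu)$, so a transfer argument is still required. The paper bypasses all of this by citing the theorem of Nazarov, Volberg, and Yuditskii \cite{Koosis}, which for exactly this class of measures proves all three facts at once: the ratio asymptotics $\varphi_n/\varphi_{n+1}\to 1/z$, the pointwise vanishing $\varphi_n(z_j;\mu)\to 0$ at every exterior mass point, and the vanishing of the interior mass $\int|\varphi_n|^2\,d\mu_1\to 0$. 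With that citation in hand the remainder of your argument (boundary support $\Rightarrow$ moments determine the measure $\Rightarrow$ limit is $\omega_{\bard}$) is complete and matches the paper's use of Corollary~\ref{equibcor}.
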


Theorem \ref{eqo} tells us that if the monic orthogonal polynomials exhibit ratio asymptotics when $|z|$ is sufficiently large, then the weak asymptotic measures all have the same moments.  There is another canonical measure associated to the orthonormal polynomials, namely the normalized zero counting measure.  We define the measure $\nu_n$ by
\begin{align}\label{nudef}
d\nu_n=\frac{1}{n}\sum_{j=1}^n\delta_{z_j},
\end{align}
where $\{z_1,\ldots,z_n\}$ is the set of zeros of $\varphi_n(z;\mu)$, each listed a number of times equal to its multiplicity as a zero of $\varphi_n(\cdot;\mu)$.  With this notation, we can state our next result.

\begin{corollary}\label{weakzero}
Let $\mu$ be as in Theorem \ref{eqo}.  If there exists a positive real number $R$ and a function $f(z)$ analytic in $\{z:R<|z|\leq\infty\}$ such that
\[
\lim_{\nri}\frac{\Phi_n(z;\mu)}{\Phi_{n+1}(z;\mu)}=f(z),\qquad R<|z|\leq\infty,
\]
then for every $j\in\bbN$, the following limit exists:
\begin{align}\label{weak2}
\lim_{\nri}\int z^jd\nu_n(z),
\end{align}
and it is equal to the limit in (\ref{weak1}).
\end{corollary}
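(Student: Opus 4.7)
The plan is to express the $j$th moment of $\nu_n$ as a normalized trace of a power of the truncated Bergman shift matrix and then exploit the Hessenberg structure of $M$ to reduce it to a Ces\`aro average of the sequence $\{\int z^j|\varphi_k|^2d\mu\}_k$, whose limit is already delivered by Theorem \ref{eqo}.

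First, Proposition \ref{det} says $\Phi_n(z;\mu)=\det(z-\pi_nM\pi_n)$, so the zeros of $\Phi_n$ (listed with multiplicity) are precisely the eigenvalues of the $n\times n$ matrix $\pi_nM\pi_n$. Therefore
\[
\int z^j\,d\nu_n(z)=\frac{1}{n}\Tr\bigl((\pi_nM\pi_n)^j\bigr)=\frac{1}{n}\sum_{k=1}^n\bigl((\pi_nM\pi_n)^j\bigr)_{kk}.
\]
On the other hand, $(M^j)_{kk}=\langle\mcm^j\varphi_{k-1},\varphi_{k-1}\rangle=\int z^j|\varphi_{k-1}(z;\mu)|^2\,d\mu(z)$, and Theorem \ref{eqo} guarantees that $L_j:=\lim_{k\to\infty}(M^j)_{kk}$ exists and equals the limit in (\ref{weak1}). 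It thus suffices to show $\frac{1}{n}\Tr\bigl((\pi_nM\pi_n)^j\bigr)\to L_j$.

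Next, I invoke the Hessenberg condition $M_{a,b}=0$ whenever $a>b+1$. Expanding
\[
(M^j)_{kk}=\sum_{c_1,\ldots,c_{j-1}}M_{k,c_1}M_{c_1,c_2}\cdots M_{c_{j-1},k},
\]
every nonvanishing term satisfies $k\leq c_1+1$, $c_i\leq c_{i+1}+1$, and $c_{j-1}\leq k+1$, from which a simple induction gives $c_i\leq k+(j-i)\leq k+j-1$. So whenever $k+j-1\leq n$, equivalently $k\leq n-j+1$, the truncation $\pi_n$ is invisible and $\bigl((\pi_nM\pi_n)^j\bigr)_{kk}=(M^j)_{kk}$. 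The remaining $j-1$ diagonal entries, for $k\in\{n-j+2,\ldots,n\}$, are uniformly bounded by $\|\mcm\|^j<\infty$ (finite since $\supp\mu$ is compact), so together they contribute $O(1/n)$ after division by $n$.

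Combining these observations gives
\[
\int z^j\,d\nu_n(z)=\frac{1}{n}\sum_{k=1}^{n-j+1}(M^j)_{kk}+O(1/n),
\]
and since $(M^j)_{kk}\to L_j$, Ces\`aro averaging (together with $(n-j+1)/n\to 1$) yields $\int z^j\,d\nu_n\to L_j$, matching the limit in (\ref{weak1}). The only real obstacle is the Hessenberg bookkeeping showing that the truncation does not asymptotically contaminate the trace; everything else is a direct invocation of Theorem \ref{eqo}, Proposition \ref{det}, and Ces\`aro summation.
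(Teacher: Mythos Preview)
Your proof is correct and follows essentially the same route as the paper's. The paper simply quotes \cite[Proposition 2.3]{WeakCD} for the estimate
\[
\left|\int z^jd\nu_{n}(z)-\frac{1}{n}\sum_{k=0}^{n-1}\int z^j|\varphi_k(z;\mu)|^2d\mu(z)\right|\leq\frac{2j\|M\|}{n}
\]
and then applies Theorem \ref{eqo} and Ces\`aro summation; your Hessenberg bookkeeping reproves precisely that estimate from scratch, so the only difference is that your argument is self-contained rather than citing the external result.
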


\begin{proof}
Let us recall \cite[Proposition 2.3]{WeakCD}, which tells us that
\[
\left|\int z^jd\nu_{n}(z)-\frac{1}{n}\sum_{k=0}^{n-1}\int z^j|\varphi_k(z;\mu)|^2d\mu(z)\right|\leq\frac{2j\| M\|}{n}.
\]
The result now follows from Theorem \ref{eqo}.
\end{proof}

\vspace{2mm}

%So far, our main theorems have been stated without any reference to the geometry of the support of $G$ and are therefore applicable in a wide variety of settings.  We will now turn our attention to cases in which some additional hypotheses on the support of $\mu$ allow us to make stronger conclusions.

%Many of the results we have stated so far involve the convergence of certain matrix elements or products of matrix elements of the matrix $M$ as we move along its diagonals.  However, there are interesting cases to consider when these matrix elements do not converge but still admit a discernible asymptotic structure.  We will investigate the case when the measure $\mu$ is supported on a polynomial lemniscate in a future paper \cite{SimTK}.

The next Section is devoted to the proofs of the main results we have just discussed.  The remaining sections are devoted to the proofs of the corollaries and the investigation of examples and special cases to which we can apply our results.

\vspace{2mm}

\noindent\textbf{Acknowledgements.}  It is a pleasure to thank Ed Saff for encouraging me to pursue this line of inquiry and for much useful discussion and feedback concerning this work.  I would also like to thank Barry Simon for useful discussion.

\section{Proof of the Main Theorems}\label{equiv}

Our goal in this section is to prove many of the main results stated in the previous section.  The key fact that we will use is Proposition \ref{det}, which will allow us to write an explicit Laurent series for the ratio of consecutive monic orthogonal polynomials when $|z|$ is large.  We begin with the following lemma:

\begin{lemma}\label{repeat}
Let $\{i_1,\ldots,i_k\}\in\bbN^k$ satisfy
\[
i_{j+1}\geq i_j-1,\qquad 1\leq j\leq k-1.
\]
Let $i_m$ and $i_{m'}$ be two indices satisfying the following conditions:
\begin{enumerate}
\item $m<m'$,
\item $i_j\neq i_{\ell}$ for $j,\ell\in\{m,\ldots,m'-1\}$ and $j\neq \ell$,
\item $i_m=i_{m'}$.
\end{enumerate}
Let us denote
\[
i_*=\min\{i_m,i_{m+1},\ldots,i_{m'}\},\qquad
i^*=\max\{i_m,i_{m+1},\ldots,i_{m'}\}.
\]
Then
\[
M_{i_m,i_{m+1}}M_{i_{m+1},i_{m+2}}\cdots M_{i_{m'-1},i_{m'}}=\frac{\kappa_{i_*-1}}{\kappa_{i^*-1}}M_{i_*,i^*}.
\]
\end{lemma}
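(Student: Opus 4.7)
The plan rests on two elementary facts about $M$: since $M$ is Hessenberg, $M_{a,b}=0$ whenever $a>b+1$, so in combination with the hypothesis $i_{j+1}\geq i_j-1$ every descent step $i_j>i_{j+1}$ in the sequence must drop by exactly one; and the subdiagonal entries admit the explicit formula $M_{b+1,b}=\kappa_{b-1}/\kappa_b$, obtained by writing $z\varphi_{b-1}$ in the orthonormal basis and reading off the coefficient of $\varphi_b$. With these in hand, the proof reduces to pinning down the combinatorial shape of the loop and then performing a telescoping computation.

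First I would argue that the distinctness hypothesis forces the sequence $(i_m,i_{m+1},\ldots,i_{m'})$ to have a rigid ``descend--jump--descend'' shape. Let $p$ be the position at which the maximum $i^*$ is attained. Because every descent step drops by exactly one, and the entries $i_p,i_{p+1},\ldots,i_{m'-1}$ are pairwise distinct and bounded above by $i^*$, no intermediate up-jump can occur after position $p$ without revisiting a value already produced by the forced descent from $i^*$. This will give $i_{p+j}=i^*-j$ for $0\leq j\leq m'-p$, and the condition $i_{m'}=i_m$ will then force $m'-p=i^*-i_m$. An analogous argument applied to the initial segment will yield $i_{m+j}=i_m-j$ for $0\leq j\leq p-m-1$, so the sequence first descends by one from $i_m$ to the minimum $i_*=i_m-(p-m-1)$, then jumps up to $i^*$ in a single step, and then descends by one back to $i_m$. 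I expect this combinatorial rigidity to be the main obstacle: the delicate point is ruling out up-jumps other than the single one to $i^*$, and this is where the interplay between the Hessenberg constraint and the distinctness assumption must be exploited carefully.

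Once the shape of the loop is identified, the product will split into three factors: the initial descent, the single up-jump $M_{i_*,i^*}$, and the final descent. Applying $M_{k+1,k}=\kappa_{k-1}/\kappa_k$, the initial descent telescopes to $\kappa_{i_*-1}/\kappa_{i_m-1}$ and the final descent telescopes to $\kappa_{i_m-1}/\kappa_{i^*-1}$, yielding a combined factor of $\kappa_{i_*-1}/\kappa_{i^*-1}$. Multiplying by the middle factor $M_{i_*,i^*}$ then gives the identity claimed in the lemma. The degenerate case in which the initial descent is empty (so $i_*=i_m$ and $p=m+1$) is handled by the same formula interpreted with an empty telescoping product.
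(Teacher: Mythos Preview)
Your approach is correct and matches the paper's exactly: the paper also asserts that the only admissible shape is ``descend from $i_m$ to $i_*$, single jump to $i^*$, descend back to $i_m$'' and then telescopes the subdiagonal factors via $M_{n,n-1}=\kappa_{n-2}/\kappa_{n-1}$, though it simply states the shape claim in one line without the combinatorial justification you sketch. One small addendum: your argument organized around the position $p$ of the maximum needs the symmetric degenerate case $i^*=i_m$ (final descent empty, so the lone up-jump is the last step) in addition to the case $i_*=i_m$ you mention, but the telescoping computation handles it identically.
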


\begin{proof}
Since $i_{j+1}\geq i_j-1$ when $1\leq j\leq k-1$, the only way the hypotheses of the lemma can be satisfied is if
\begin{align*}
&M_{i_m,i_{m+1}}M_{i_{m+1},i_{m+2}}\cdots M_{i_{m'}-1,i_{m'}}=\\
&\qquad\qquad\qquad M_{i_m,i_{m}-1}M_{i_{m}-1,i_{m}-2}\cdots M_{i_*+1,i_*}M_{i_*,i^*}M_{i^*,i^*-1}\cdots M_{i_{m'}+1,i_{m'}}.
\end{align*}
It is straightforward to verify that $M_{n,n-1}=\kappa_{n-2}\kappa_{n-1}^{-1}$, and so the conclusion of the lemma follows.
\end{proof}

\begin{proof}[Proof of Theorem \ref{eqo}]
Let us assume that there is an analytic function $f(z)$ and a real number $R$ so that
\[
\lim_{\nri}\frac{\Phi_n(z;\mu)}{\Phi_{n+1}(z;\mu)}=f(z),\qquad R<|z|\leq\infty.
\]
Let $\rho_{M,n}(z)=(z-\pi_nM\pi_n)^{-1}$ be the resolvent of the truncated $M$ matrix.  By (\ref{keydet}) and Cramer's rule, we can write (when $|z|$ is sufficiently large)
\begin{align}\label{cram}
\frac{\Phi_{n-1}(z;\mu)}{\Phi_{n}(z;\mu)}=\rho_{M,n}(z)_{n,n}=\frac{1}{z}\sum_{j=0}^{\infty}\frac{\left((\pi_nM\pi_n)^j\right)_{n,n}}{z^j}
\end{align}
(see also \cite[equation (2.21)]{SimonWeak}).  Therefore, we see that
\[
\lim_{\nri}\left((\pi_nM\pi_n)^j\right)_{n,n}
\]
converges to the $(j+1)^{st}$ coefficient in the Laurent expansion of the function $f(z)$ around infinity.  We conclude that
\[
\lim_{\nri}\left((\pi_nM\pi_n)^j\right)_{n,n}
\]
exists for every $j\in\bbN$.  The proof proceeds now by induction.
For the base case of the induction, we set $j=0$ in (\ref{kappa2}), and note that
\[
\lim_{\nri}\frac{\kappa_{n-1}}{\kappa_{n-1}}M_{n,n}=\lim_{\nri}(\pi_nM\pi_n)_{n,n},
\]
and we have just seen that this limit exists.
%Similarly, if we set $j=1$ in (\ref{kappa2}) then
%\[
%\lim_{\nri}\frac{\kappa_{n-2}}{\kappa_{n-1}}M_{n-1,n}=\lim_{\nri}M_{n,n-1}M_{n-1,n}=\lim_{\nri}\left((\pi_nM\pi_n)^2\right)_{n,n}-\lim_{\nri}M_{n,n}^2.
%\]
%Since both limits on the right hand side

To state our induction hypothesis, we need some auxiliary notation.  Let $[k]=\{0,1,\ldots,k\}$ and for each $k\in\bbN_0$ define $L(k)$ by
\begin{align}\label{ldef}
L(k)=\left\{\{i_0,i_1,\ldots,i_{k+1}\}\in[k]^{k+2}:i_{j+1}\leq i_j+1\mbox{ if } 0\leq j\leq k;\,i_{k+1}=i_0=0\right\}.
\end{align}
We know that
\begin{align}\label{longsum}
\left((\pi_nM\pi_n)^{k+1}\right)_{n,n}=\sum_{i_1,\ldots,i_{k}=0}^{n-1}M_{n,n-i_i}M_{n-i_1,n-i_2}\cdots M_{n-i_{k},n}.
\end{align}
%so if we take $\nri$, the right hand side of (\ref{longsum}) approaches a limit.
Notice that in the sum (\ref{longsum}), the Hessenberg structure of the matrix $M$ requires $n-i_{j+1}\geq n-i_j-1$ for $j=1,\ldots,k-1$ and $n-i_1\geq n-1$ in order for the corresponding term in the sum to be non-zero.  In other words, to every non-zero term in the sum (\ref{longsum}), there corresponds a unique element of $L(k)$.  Therefore, we may rewrite (\ref{longsum}) as
\begin{align}\label{longsum2}
\left((\pi_nM\pi_n)^{k+1}\right)_{n,n}=\sum_{\{i_0,\ldots,i_{k+1}\}\in L(k)}M_{n-i_0,n-i_i}M_{n-i_1,n-i_2}\cdots M_{n-i_{k},n-i_{k+1}}.
\end{align}
Notice that $L(0)=\{\{0,0\}\}$ and we have already verified that the corresponding term in (\ref{longsum2}) approaches a limit as $\nri$.  For our induction hypothesis, we will assume that for every $0\leq t<k$ and every $\{i_0,\ldots,i_{t+1}\}\in L(t)$, the following limit exists:
\begin{align}\label{tterm}
\lim_{\nri}M_{n-i_0,n-i_1}M_{n-i_1,n-i_2}\cdots \cdots M_{n-i_t,n-i_{t+1}}.
\end{align}
In particular, notice that $\{0,1,2,\ldots,j,0\}\in L(j)$ and so our induction hypothesis implies
\[
\lim_{\nri}\frac{\kappa_{n-1-j}}{\kappa_{n-1}}M_{n-j,n}
\]
exists for all $j<k$.

For every $x\in L(k)$, let us break the element $x$ into subchains
\[
x=\{0,i_1,\ldots,i_{m_1}\}\cup\{i_{m_1+1},\ldots,i_{m_2}\}\cup\cdots\cup\{i_{m_{p-1}+1},\ldots,i_k,0\},
\]
where 
\begin{align*}
&n>n-i_1>\cdots >n-i_{m_1}\leq n-i_{m_1+1}\\
&n-i_{m_1+1}>n-i_{m_1+2}>\cdots>n-i_{m_2}\leq n-i_{m_2+1}\\
&\vdots\\
&n-i_{m_{p-1}+1}>n-i_{m_{p-1}+2}>\cdots>n-i_k.
\end{align*}
There are now two cases to consider.

\vspace{3mm}

\noindent\underline{Case 1:}  $x$ is exactly one subchain %(i.e. $i_{m_{p-1}+1}=0$)

\vspace{3mm}

In this case, $i_{\ell}=\ell$ for all $\ell\in\{0,1,\ldots,k\}$ so Lemma \ref{repeat} implies the term in (\ref{longsum2}) corresponding to $x$ is exactly $\kappa_{n-k-1}\kappa_{n-1}^{-1}M_{n-k,n}$.

\vspace{3mm}

\noindent\underline{Case 2:}  $x$ splits into more than one subchain %(i.e. $i_{m_{p-1}+1}>0$)

\vspace{3mm}

In this case, consider the term in (\ref{longsum2}) corresponding to $x$:
\begin{align}\label{xterm}
M_{n,n-i_1}M_{n-i_1,n-i_2}\cdots \cdots M_{n-i_k,n}.
\end{align}
Since $x$ splits into more than one subchain, there must be some $\ell\in\{0,1,\ldots,m_1\}$ so that $i_{m_1+1}=i_{\ell}=\ell$.  Therefore, somewhere in (\ref{xterm}) lies the expression
\begin{align}\label{jfac}
M_{n-\ell,n-\ell-1}M_{n-\ell-1,n-\ell-2}\cdots M_{n-\ell-q,n-\ell}
%=\frac{\kappa_{i_j-q-1}}{\kappa_{i_j-1}}M_{i_j-q,i_j}
=\frac{\kappa_{n-\ell-q-1}}{\kappa_{n-\ell-1}}M_{n-\ell-q,n-\ell}
\end{align}
for some natural number $q<k$ (we used Lemma \ref{repeat} here).  The induction hypothesis implies that if we take $\nri$ in (\ref{jfac}) then the expression approaches a limit.  If we remove the factor (\ref{jfac}) from (\ref{xterm}), then we are left with
\begin{align}\label{takeout}
M_{n,n-i_1}\cdots M_{n-i_{\ell-1},n-i_{\ell}}M_{n-i_{\ell},n-i_{m_1+2}}\cdots M_{n-i_k,n},
\end{align}
which corresponds to
\[
\{i_0,\ldots,i_{\ell},i_{m_1+2},\ldots,i_k,i_{k+1}\}\in L(k+\ell-m_1-1).
\] 
Therefore, we may apply the induction hypothesis to the factor (\ref{takeout}) to see that it also approaches a limit as $\nri$.  We conclude that the term corresponding to $x$ in (\ref{longsum2}) approaches a limit as $\nri$.

\vspace{2mm}

Case 2 establishes convergence of the term corresponding to each element of $L(k)$ except $\{0,1,2,\ldots,k,0\}$.  Put differently, we know that every term in (\ref{longsum2}) except $\kappa_{n-k-1}\kappa_{n-1}^{-1}M_{n-k,n}$ approaches a limit as $\nri$.  We know the expression in (\ref{longsum2}) approaches a limit as $\nri$, so we must have $\kappa_{n-1-k}\kappa_{n-1}^{-1}M_{n-k,n}$ also approaches a limit, which completes the induction.

\vspace{2mm}

For the converse statement, suppose $\kappa_{n-1-k}\kappa_{n-1}^{-1}M_{n-k,n}$ approaches a limit as $\nri$ for every $k\in \bbN_0$.  Notice that the above proof shows that every term in the sum (\ref{longsum2}) can be written as a product of factors of the form
\[
\frac{\kappa_{n-1-q-\ell}}{\kappa_{n-1-q}}M_{n-\ell-q,n-q}
\]
for appropriate natural numbers $q$ and $\ell$.  Since we assuming all these factors approach a limit as $\nri$,
we conclude that for every $j\in\bbN_0$, the following limit exists:
\begin{align}\label{jlim}
\lim_{\nri}\left((\pi_nM\pi_n)^j\right)_{n,n}.
\end{align}
Let us denote the limit in (\ref{jlim}) by $A_j$ and consider the function
\begin{align}\label{toz}
\sum_{j=0}^{\infty}\frac{(\pi_nM\pi_n)^j_{n,n}}{z^{j+1}}-\sum_{j=0}^{\infty}\frac{A_j}{z^{j+1}},\qquad|z|=R>2\|\mcm\|.
\end{align}
If we split the sum after the first $J$ terms ($J\in\bbN$ here is arbitrary), then the difference between the first $J$ terms converges to $0$ as $\nri$, while
\[
\left|\sum_{j=J}^{\infty}\frac{(\pi_nM\pi_n)^j_{n,n}}{z^{j+1}}-\sum_{j=J}^{\infty}\frac{A_j}{z^{j+1}},\right|\leq
\sum_{j=J}^{\infty}\frac{2\|\mcm\|^j}{(2\|\mcm\|)^{j+1}}=\frac{2^{1-J}}{\|\mcm\|}.
\]
It follows that
\[
\lim_{\nri}\frac{\Phi_{n-1}(z;\mu)}{\Phi_n(z;\mu)}=\sum_{j=0}^{\infty}\frac{A_j}{z^j},
\]
on the circle $|z|=R$.  The maximum principle provides convergence outside this circle.
%We can rewrite the expression in (\ref{longsum}) as
%\[
%M_{n,n}\left(\pi_nM\pi_n\right)^{k-1}_{n,n}+M_{n,n-1}\left(\pi_nM\pi_n\right)^{k-1}_{n-1,n}.
%\]
%The first of these two terms tends to a limit as $\nri$ by our induction hypothesis, so we may focs on the second term.

\vspace{2mm}

For the proof of the integral convergence statement, we must prove convergence of $\left(M^j\right)_{n,n}$ as $\nri$.  The idea behind the proof is the same as that given above.  We notice that
\begin{align}\label{jsum}
\left(M^{j}\right)_{n,n}=\sum_{i_1,\ldots,i_{j-1}=1}^{\infty}M_{n,i_i}M_{i_1,i_2}\cdots M_{i_{j-1},n}.
\end{align}
As before, there are only finitely many non-zero terms in this sum, each corresponding to a $(j+1)$-tuple $\{n,i_1,\ldots,i_{j-1},n\}$.  It is easy to see that we can always find a pair of indices $(i_m,i_{m'})$ in this $(j+1)$-tuple to which we can apply Lemma \ref{repeat}.  Furthermore, after removing $\{i_{m+1},\ldots,i_{m'}\}$ from this $(j+1)$-tuple, we are left with a $(j+1-m'+m)$-tuple that corresponds to a non-zero term in the sum (\ref{jsum}) with $j$ replaced by $j-m'+m$.  Therefore, we may again apply Lemma \ref{repeat}.  By iterating this procedure, we see that each non-zero term in (\ref{jsum}) can be written as a product of factors of the form
\[
\frac{\kappa_{n-1+q-\ell}}{\kappa_{n-1+q}}M_{n+q-\ell,n+q}
\]
for an appropriate choice of $q,\ell\in\{-j,-j+1,\ldots,j-1,j\}$.  Since we are assuming that these factors approach a limit as $\nri$, we obtain the desired convergence and the limit is determined by the limits in (\ref{kappa2}).  %This completes the proof of Theorem \ref{eqo}.
\end{proof}

Now we will prove Theorem \ref{whatlimit}, which gives us precise information about the limiting Toeplitz matrix in the case $\liminf\kappa_{n}\kappa_{n+1}^{-1}>0$.

\begin{proof}[Proof of Theorem \ref{whatlimit}]
Let us write
\[
f(z)=\sum_{j=1}^{\infty}f_jz^{-j},\qquad R<|z|\leq\infty,\qquad f_1=\lim_{\nri}\frac{\kappa_n}{\kappa_{n+1}}.
\]
One can rewrite (\ref{gdef}) as
\begin{align}\label{glong}
\beta_{-1}+\frac{\beta_0}{z}+\frac{\beta_1}{z^2}+\frac{\beta_2}{z^3}+\cdots=\frac{g(z)}{z}=\sum_{j=1}^{\infty}f_jg(z)^{1-j}.
\end{align}
This easily implies $\beta_{-1}=f_1$.  If we define $h_k$ as a map on functions by $h_k(\ell)=g(z)\ell(z)-f_k$, then it follows from (\ref{glong}) that
\begin{align}\label{hsubdef}
\frac{f_{k+1}}{f_1}=\lim_{z\rightarrow\infty}\frac{g(z)}{\beta_{-1}}\left((h_k\circ h_{k-1}\circ\cdots\circ h_1)(z^{-1})\right).
\end{align}
%This then gives us a system of equations that can be used to determine $\beta_j$ for every $j\in\bbN_0$ in terms of $\{f_n\}_{n=1}^{j+2}$.  We proceed by induction on $j$.
One can check by hand that
\begin{align}\label{hone}
h_1(z^{-1})=\frac{\beta_0}{z}+\frac{\beta_1}{z^{2}}+\cdots
\end{align}
More generally, if one writes
\[
(h_k\circ h_{k-1}\circ\cdots\circ h_1)(z^{-1})=\sum_{j=1}^{\infty}\frac{c_j^{(k)}}{z^j},
\]
then we see from the definition of $h_k$ that
\begin{align}\label{crecur}
c_j^{(k+1)}=\sum_{m=-1}^{j-1}\beta_mc_{j-m}^{(k)}.
\end{align}

Let $B$ be the infinite matrix indexed by the natural numbers satisfying $B_{j,k}=\beta_{j-k}$ (so $B$ is the transpose of a Hessenberg matrix).  Let $c^{(k)}$ denote the infinite column vector whose entry in the $j^{th}$ row is $c_{j}^{(k)}$.  With this notation, the recursion (\ref{crecur}) can be rewritten as
\[
c^{(k+1)}=Bc^{(k)}.
%\langle e_j,c^{(k+1)}\rangle=\langle e_j,Bc^{(k)}\rangle.
\]
Iterating this formula and taking (\ref{hone}) into account, one obtains
\[
\frac{f_{k+1}}{f_1}=c_1^{(k)}=\left(B^k\right)_{1,1}.
\]
In other words,
\begin{align}\label{indbase}
\lim_{\nri}\left((\pi_nM\pi_n)^k\right)_{n,n}=\left(B^k\right)_{1,1},\qquad k\in\bbN.
\end{align}
We have already seen that $M_{n-j,n}\rightarrow\beta_{j}$ when $j=-1$.  The proof for the remaining values of $j$ proceeds by induction on $j$, the base case $j=0$ being obvious from (\ref{indbase}) with $k=1$.

For our induction hypothesis, assume $\lim_{\nri}M_{n-j,n}=\beta_j$ for all $j<k$.  Then (\ref{indbase}) (with $k$ replaced by $k+1$) implies
\begin{align*}
\lim_{\nri}\sum_{i_1,\ldots,i_k=0}^{k}M_{n,n-i_1}M_{n-i_1,n-i_2}\cdots M_{n-i_k,n}
&=\sum_{i_1,\ldots,i_k=0}^{k}B_{1,1+i_1}B_{1+i_1,1+i_2}\cdots B_{1+i_k,1}\\
&=\sum_{i_1,\ldots,i_k=0}^{k}\beta_{-i_1}\beta_{i_1-i_2}\cdots\beta_{i_k}.
\end{align*}
Our induction hypotheses tells us that (with $i_0=i_{k+1}=0$)
\[
\lim_{\nri}M_{n-i_j,n-i_{j+1}}=\beta_{i_{j}-i_{j+1}},\qquad j\in\{0,1,\ldots,k\},\quad i_{j}-i_{j+1}<k.
\]
Canceling these terms in the above expression yields
\[
\lim_{\nri}M_{n,n-1}M_{n-1,n-2}\cdots M_{n-k,n}=\beta_{-1}^{k}\beta_k,
\]
from which it follows that $M_{n-k,n}\rightarrow\beta_k$ as $\nri$ as desired.
\end{proof}

We also state here the following result, which is a consequence of the proof of Theorem \ref{eqo}

\begin{prop}\label{normf}
Let $\mu$ be any measure of compact and infinite support in the complex plane.  Then
\[
\left\{\frac{\Phi_{n-1}(z;\mu)}{\Phi_{n}(z;\mu)}\right\}_{n\in\bbN}
\]
is a normal family on the set $\{z:|z|>\|\mcm\|\}$.
\end{prop}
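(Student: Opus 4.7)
The plan is to read the normality claim directly off the Laurent series representation already derived in the proof of Theorem \ref{eqo}. By equation (\ref{cram}), for $|z|$ sufficiently large one has
\[
\frac{\Phi_{n-1}(z;\mu)}{\Phi_n(z;\mu)}=\frac{1}{z}\sum_{j=0}^{\infty}\frac{\bigl((\pi_n M\pi_n)^j\bigr)_{n,n}}{z^j}.
\]
Since $\pi_n$ is an orthogonal projection, $\|\pi_n M\pi_n\|\leq\|\mcm\|$, so each matrix entry is bounded by $\|\mcm\|^j$. Hence the series converges absolutely for every $z$ with $|z|>\|\mcm\|$, and on any such $z$,
\[
\left|\frac{\Phi_{n-1}(z;\mu)}{\Phi_n(z;\mu)}\right|\leq\frac{1}{|z|}\sum_{j=0}^{\infty}\frac{\|\mcm\|^j}{|z|^j}=\frac{1}{|z|-\|\mcm\|},
\]
uniformly in $n$.

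Next I would verify that this ratio really is a holomorphic function of $z$ on all of $\{|z|>\|\mcm\|\}$, not merely on the initial region of convergence of Cramer's formula. By Proposition \ref{det}, the zeros of $\Phi_n$ are the eigenvalues of $\pi_n M\pi_n$, which lie in a disk of radius $\|\pi_n M\pi_n\|\leq\|\mcm\|$. Hence $\Phi_n$ has no zeros in $\{|z|>\|\mcm\|\}$, so $\Phi_{n-1}/\Phi_n$ is a rational function which is analytic throughout $\{|z|>\|\mcm\|\}$ (and vanishes at infinity). By the identity theorem, the Laurent representation, and therefore the above bound, extends to the whole region $\{|z|>\|\mcm\|\}$.

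The conclusion now follows from Montel's theorem: on every compact subset $K\subset\{|z|>\|\mcm\|\}$, the quantity $\operatorname{dist}(K,\{|z|\leq\|\mcm\|\})>0$, and the estimate above yields a uniform bound on $|\Phi_{n-1}/\Phi_n|$ over $K$ independent of $n$. Thus $\{\Phi_{n-1}/\Phi_n\}_{n\in\bbN}$ is locally uniformly bounded and hence normal on $\{|z|>\|\mcm\|\}$. There is no real obstacle here; the only point to be careful about is not confusing the region of initial convergence of the geometric series with the full region of analyticity, which is resolved by the eigenvalue observation above.
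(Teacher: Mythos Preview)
Your proof is correct and follows essentially the same line as the paper's: derive the Laurent expansion from (\ref{cram}), bound the coefficients by $\|\mcm\|^j$, and apply Montel's Theorem. Your added remark that the zeros of $\Phi_n$ are eigenvalues of $\pi_nM\pi_n$ (hence lie in $\{|z|\le\|\mcm\|\}$) makes explicit why the series representation is valid on the entire region $\{|z|>\|\mcm\|\}$, a point the paper takes for granted.
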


\begin{proof}
We have seen in the proof of Theorem \ref{eqo} that if $|z|>\|\mcm\|$, then
\[
\frac{\Phi_{n-1}(z;\mu)}{\Phi_{n}(z;\mu)}=\sum_{j=0}^{\infty}\frac{((\pi_{n}M\pi_{n})^j)_{n,n}}{z^{j+1}}
\]
so that
\[
\left|\frac{\Phi_{n-1}(z;\mu)}{\Phi_{n}(z;\mu)}\right|\leq\sum_{j=0}^{\infty}\frac{|((\pi_{n}M\pi_{n})^j)_{n,n}|}{|z|^{j+1}}\leq\frac{1}{|z|}\sum_{j=0}^{\infty}\frac{\|\mcm\|^j}{|z|^{j}}.
\]
The desired conclusion now follows easily from Montel's Theorem.
\end{proof}

Proposition \ref{normf} tells us that we can always find a subsequence through which one observes ratio asymptotics of the monic orthogonal polynomials.  In Section \ref{ratsubs}, we will determine precisely what functions $f$ can occur as the limit of $\Phi_{n-1}\Phi_n^{-1}$ as $n$ tends to infinity through some subsequence and the measure of orthogonality has compact support in the real line.  We will also prove a similar result when the measure of orthogonality is supported on the unit circle.

\vspace{2mm}

The remaining sections are devoted to applications of the results proven in this section.  In particular, we will prove all of the corollaries stated in Section \ref{intro} and look at some examples to which we can apply our new results.

%\section{Applications and Corollaries}\label{app}

\section{Measures on the Unit Circle and Real Line.}\label{opuc}

By appealing to examples of measures $\mu$ supported on the unit circle and real line, we will explore some further aspects of Theorem \ref{eqo}.  In particular, we will show that the converse to Theorem \ref{eqo} is false in that the convergence of the integrals in (\ref{weak1}) does not imply the measure $\mu$ admits ratio asymptotics for the monic orthogonal polynomials.  We will also explore consequences of Proposition \ref{normf} in these classical settings.

\subsection{Converse to Theorem \ref{eqo}}\label{con}  We begin by considering the converse to Theorem \ref{eqo}.  The simplest setting in which to consider properties of the matrix $M$ is when $\supp(\mu)\subseteq\bbR$.  When this occurs, the matrix $M$ is identically zero away from the three main diagonals, the matrix is self-adjoint, the diagonal entries are real, and the off-diagonal entries are positive.  Also, since we are assuming $\supp(\mu)$ is compact, then the conclusion (\ref{weak1}) is sufficient to guarantee the weak convergence of the measures $\{|\varphi_n|^2d\mu\}_{n\in\bbN}$ by the Stone-Weierstrass Theorem.

Both ratio asymptotics and the weak convergence properties of the measures $\{|\varphi_n|^2d\mu\}_{n\in\bbN}$ have been studied in \cite{SimonWeak}.  It is shown in \cite[Theorems 1 $\&$ 2]{SimonWeak} that the monic orthogonal polynomials exhibit ratio asymptotics if and only $M$ is weakly asymptotically Toeplitz.  It is also shown that the measures $|\varphi_n|^2d\mu$ converge weakly as $\nri$ if and only if the following limits exist:
\[
\lim_{\nri}M_{n,n},\qquad\lim_{\nri}M_{2n,2n-1},\qquad\lim_{\nri}M_{2n+1,2n}.
\]
Clearly this is a weaker condition than the one required for ratio asymptotics, so the existence of a unique weak asymptotic measure does not imply that either of the two conditions $(a)$ or $(b)$ of Theorem \ref{eqo} hold.  Similar results hold when the measure $\mu$ is supported on the unit circle (see \cite[Chapter 9]{OPUC2}).

We will now provide a new proof of \cite[Theorem 1]{SimonWeak}.

\begin{proof}[Proof of Corollary \ref{oprlrat}]
As mentioned above, if $\supp(\mu)\subseteq\bbR$, the matrix $M$ is zero away from the three main diagonals.  Therefore, Theorem \ref{eqo} implies the monic orthogonal polynomials exhibit ratio asymptotics if and only if the limits
\[
\lim_{\nri}M_{n,n},\qquad\lim_{\nri}\frac{\kappa_{n-2}}{\kappa_{n-1}}M_{n-1,n}
\]
exist.  Since $M$ is self-adjoint, the second of these limits can be written as
\[
\lim_{\nri}\left(\frac{\kappa_{n-2}}{\kappa_{n-1}}\right)^2,
\]
which exists if and only if $\lim_{\nri}\kappa_{n-1}\kappa_{n}^{-1}$ exists.
\end{proof}

If $\mu$ is a probability measure and $\supp(\mu)\subseteq\{z:|z|=1\}$, then much structure of the matrix $M$ is known.  In this case, the matrix $M$ is sometimes called the GGT representation of the operator $\mcm$ (see \cite[Chapter 4]{OPUC1}).  It is well known that when $\supp(\mu)\subseteq\{z:|z|=1\}$ there is a sequence of complex numbers $\{\alpha_n\}_{n=0}^{\infty}$ in the unit disk (called \textit{Verblunsky Coefficients}) so that
\begin{align}\label{twoterm}
\Phi_{n+1}(z;\mu)=z\Phi_n(z;\mu)-\bar{\alpha}_nz^n\overline{\Phi_n(1/\bar{z};\mu)}.
\end{align}
If we set $\rho_n=\sqrt{1-|\alpha_n|^2}>0$, then \cite[Equation (4.1.6)]{OPUC1} tells us that
\[
M_{n-j,n}=
\begin{cases}
0, & \text{if }j<-1 \\
 \rho_{n-j-2}, & \text{if }j=-1\\
 -\bar{\alpha}_{n-1}\alpha_{n-j-2}\prod_{k=n-j-1}^{n-2}\rho_k, & \text{if }j\geq0
\end{cases}
\]
Also, it is well known that
\[
\kappa_n=\prod_{j=0}^{n-1}\rho_j^{-1}.
\]
Therefore, Theorem \ref{eqo} tells us that the polynomials $\{\Phi_n(\cdot;\mu)\}_{n\in\bbN}$ exhibit ratio asymptotics if and only if for every $j\in\bbN_0$
\begin{align}\label{ggtlim}
\lim_{\nri}-\bar{\alpha}_{n-1}\alpha_{n-j-2}\prod_{k=n-j-1}^{n-2}\rho^2_k
\end{align}
exists.  We also know from \cite[Chapter 9]{OPUC2} that the polynomials $\{\Phi_n(\cdot;\mu)\}_{n\in\bbN}$ exhibit ratio asymptotics if and only if one of the following two conditions hold:
\begin{enumerate}
\item  For every $k\in\bbN$ it holds that $\lim_{\nri}\alpha_n\alpha_{n+k}=0$.
\item There are complex numbers $a\in(0,1]$ and $\lambda\in\partial\bbD$ so that
\[
\lim_{\nri}|\alpha_n|=a,\qquad\lim_{\nri}\frac{\alpha_{n+1}}{\alpha_n}=\lambda.
\]
\end{enumerate}
It is not obvious that one of these two conditions being satisfied is equivalent to (\ref{ggtlim}), but Theorem \ref{eqo} tells us that it must be.  We leave the details to the reader but we will provide a new proof of a (well-known) consequence of $(a)$.

\begin{proof}[Proof of Corollary \ref{opucweak}]
Suppose that for all $k\in\bbN$ it holds that
\[
\lim_{\nri}\Phi_n(0;\mu)\Phi_{n+k}(0;\mu)=0.
\]
It follows easily from (\ref{twoterm}) that $-\bar{\alpha}_n=\Phi_n(0;\mu)$.  Therefore, we can rewrite our hypothesis as $\lim_{\nri}\alpha_n\alpha_{n+k}=0$.  It now follows that
\[
\lim_{\nri}-\bar{\alpha}_{n-1}\alpha_{n-k-2}\prod_{j=n-k-1}^{n-2}\rho^2_j=0,\qquad k\in\bbN.
\]
The proof of Theorem \ref{eqo} and (\ref{ggtlim}) allow us to deduce that
\[
\lim_{\nri}\int_{\partial\bbD}z^k|\varphi_n(z;\mu)|^2d\mu(z)=0,\qquad k\in\bbN,
\]
which is the desired conclusion.
\end{proof}

Notice that Corollary \ref{matif} requires the additional hypothesis $\liminf_{\nri}\kappa_n\kappa_{n+1}^{-1}>0$.  The following example shows that this is an essential hypothesis, for if it fails, the behavior of the ratio of consecutive orthonormal polynomials is trivial and tell us nothing.

\vspace{2mm}

\noindent\textbf{Example.}  Let $\mu$ be a probability measure (with infinite support) satisfying $\supp(\mu)\subseteq\partial\bbD$ and suppose the Verblunsky coefficients satisfy $\lim_{\nri}\alpha_n=1$.  In this case, $\kappa_n\kappa_{n+1}^{-1}\rightarrow0$.  Therefore,
\[
\lim_{\nri}\frac{\kappa_{n-1-j}}{\kappa_{n-1}}M_{n-j,n}=
\begin{cases}
0, & \text{if }j\neq0 \\
-1, & \text{if }j=0
\end{cases},
\]
and so we conclude from Theorem \ref{eqo} that there is an analytic function $f$ so that
\[
\lim_{\nri}\frac{\Phi_n(z;\mu)}{\Phi_{n+1}(z;\mu)}=f(z),\qquad|z|>1.
\]
Since $\kappa_n\kappa_{n+1}^{-1}\rightarrow0$, we can also write
\[
\lim_{\nri}\frac{\varphi_n(z;\mu)}{\varphi_{n+1}(z;\mu)}=0
\]
uniformly on compact subsets of $\barc\setminus\bard$.

Now consider the case when the recursion coefficients of $\mu$ satisfy
\[
\alpha_n=\left(1-\frac{1}{n+1}\right)e^{in^2}.
\]
In this case, $\lim_{\nri}M_{n,n}$ does not exist, so one does not have ratio asymptotics for the monic orthogonal polynomials.  However, Proposition \ref{normf} shows that $\Phi_n(\cdot;\mu)\Phi_{n+1}(\cdot;\mu)^{-1}$ is uniformly bounded on compact subsets of $\barc\setminus\bard$, so in this case we can still conclude
\[
\lim_{\nri}\frac{\varphi_n(z;\mu)}{\varphi_{n+1}(z;\mu)}=0
\]
uniformly on compact subsets of $\barc\setminus\bard$.  Therefore, if $\lim_{\nri}\kappa_n\kappa_{n+1}^{-1}=0$, ratio asymptotics for the orthonormal polynomials are not indicative of the behavior of the monic orthogonal polynomials.

%\vspace{2mm}

\subsection{Ratio Asymptotics Along Subsequences}\label{ratsubs}  Now let us consider Proposition \ref{normf} in the context of orthogonal polynomials on the real line.  We have already seen exactly when the monic orthogonal polynomials exhibit ratio asymptotics as $\nri$ through $\bbN$, but Proposition \ref{normf} tells us that even if this does not occur, we can still find subsequences through which the monic orthogonal polynomials exhibit ratio asymptotics.  It turns out that when $\mu$ is supported on a compact subset of the real line, we can determine the possible set of limit points of $\Phi_{n-1}(z;\mu)\Phi_n(z;\mu)^{-1}$.  This is the content of our next result.

\begin{theorem}\label{oprlclass}
If $\mu$ is supported on a compact subset of the real line and $\mcn\subseteq\bbN$ is a subsequence so that
\begin{align}\label{nrat2}
\lim_{{\nri}\atop{n\in\mcn}}\frac{\Phi_{n-1}(z;\mu)}{\Phi_n(z;\mu)}=f(z),\qquad |z|>\|\mcm\|,
\end{align}
then there is a compactly supported probability measure $\nu$ on $\bbR$ so that
\begin{align}\label{fdef2}
f(z)=\int_{\bbR}\frac{1}{z-w}d\nu(w),\qquad |z|>\|\mcm\|.
\end{align}
Conversely, if $\nu$ and $f$ are related by (\ref{fdef2}), then there is a compactly supported probability measure $\mu$ on $\bbR$ and a subsequence $\mcn\subseteq\bbN$ so that (\ref{nrat2}) holds.
\end{theorem}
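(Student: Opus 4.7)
The plan is to treat the two implications separately. For the forward direction, I start from the observation that when $\supp(\mu)\subseteq\bbR$, the Hessenberg matrix $M$ is a self-adjoint Jacobi matrix, so each truncation $\pi_nM\pi_n$ is a symmetric $n\times n$ matrix. Combining the identity
\[
\frac{\Phi_{n-1}(z;\mu)}{\Phi_{n}(z;\mu)} = \left((z-\pi_nM\pi_n)^{-1}\right)_{n,n}, \qquad |z|>\|\mcm\|,
\]
established in the proof of Theorem \ref{eqo}, with the spectral theorem for $\pi_nM\pi_n$, this ratio equals the Stieltjes transform of the spectral measure $\sigma_n$ of $\pi_nM\pi_n$ at the last basis vector $e_n$. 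Each $\sigma_n$ is a probability measure supported in $[-\|\mcm\|,\|\mcm\|]$, so by weak compactness there is a subsequence of $\mcn$ along which $\sigma_n\to\nu$ weakly for some compactly supported probability measure $\nu$ on $\bbR$. Since $w\mapsto 1/(z-w)$ is continuous on the common support for each fixed $|z|>\|\mcm\|$, the Stieltjes transforms converge to the Stieltjes transform of $\nu$, which must equal $f(z)$; this proves (\ref{fdef2}).

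For the converse, given $\nu$ I want to realize $f(z)=\int d\nu(w)/(z-w)$ as a subsequential limit of $\Phi_{n-1}/\Phi_{n}$ for some compactly supported probability measure $\mu$ on $\bbR$. Let $(\hat{a}_j,\hat{b}_j)_{j\geq 1}$ be the Jacobi parameters associated to $\nu$, so that $\nu$ is the spectral measure at $e_1$ of the infinite Jacobi matrix $J_\nu$ determined by these parameters. I then build an infinite Jacobi matrix $M$ in blocks: choose rapidly growing $n_1<n_2<\cdots$ and block lengths $L_k\to\infty$ with $n_k\geq n_{k-1}+L_k+1$, and within block $k$ set
\[
b_{n_k-j+1}=\hat{b}_j \text{ for } 1\leq j\leq L_k, \qquad a_{n_k-j}=\hat{a}_j \text{ for } 1\leq j\leq L_k-1,
\]
filling in the remaining Jacobi parameters with arbitrary bounded values. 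Because $\nu$ has compact support, these parameters are uniformly bounded, so $M$ defines a bounded self-adjoint operator on $\ell^2(\bbN)$, and I take $\mu$ to be its spectral measure at $e_1$, which is a compactly supported probability measure on $\bbR$.

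The heart of the argument is the verification that $\Phi_{n_k-1}(z;\mu)/\Phi_{n_k}(z;\mu)\to f(z)$ for $|z|>\|\mcm\|$. Using the forward direction's formula, this ratio is the Stieltjes transform of the spectral measure $\sigma_{n_k}$ of $\pi_{n_k}M\pi_{n_k}$ at $e_{n_k}$. Exploiting the standard fact that for a tridiagonal matrix the spectral measure at the last basis vector coincides with the spectral measure at the first basis vector of the matrix reversed top-to-bottom, the prescription above ensures that the top-left $L_k\times L_k$ corner of this reversed matrix coincides with the top-left $L_k\times L_k$ corner of $J_\nu$. A straightforward counting of closed tridiagonal paths of length $k$ starting and ending at the first basis vector shows that the first $2L_k-1$ moments of a spectral measure at $e_1$ of a Jacobi matrix depend only on its top-left $L_k\times L_k$ block; hence the first $2L_k-1$ moments of $\sigma_{n_k}$ agree with those of $\nu$. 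As $k\to\infty$ all moments converge, and since all $\sigma_{n_k}$ are supported in a common compact set, $\sigma_{n_k}\to\nu$ weakly, from which the Stieltjes transform convergence $\Phi_{n_k-1}/\Phi_{n_k}\to f$ follows.

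The principal obstacle is conceptual rather than computational: one cannot simultaneously arrange $\pi_nM\pi_n$ to have a prescribed spectral measure at $e_n$ for \emph{every} $n$, because the corresponding Jacobi data are mutually incompatible across different $n$. The block construction resolves this by demanding the matching only along the chosen subsequence $n_k$ and by exploiting that moments of order up to roughly $2L_k$ are determined locally by an $L_k\times L_k$ neighborhood of the matrix.
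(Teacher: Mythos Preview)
Your argument is correct, and the converse direction follows essentially the paper's strategy: build a bounded Jacobi matrix whose parameters, read backwards from positions $n_k$, approximate the Jacobi parameters of $\nu$, take $\mu$ to be its spectral measure at $e_1$, and then identify $\Phi_{n_k-1}/\Phi_{n_k}$ with the $(1,1)$ resolvent entry of the reversed truncation. The only difference is cosmetic: you close with a moment-matching argument, while the paper invokes strong convergence of $\widetilde{M}_n$ to $J_\nu$ and hence of their resolvents. One small omission: as written, your construction presupposes an infinite sequence of Jacobi parameters $(\hat a_j,\hat b_j)_{j\ge1}$, i.e.\ that $\nu$ has infinite support. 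When $\nu$ is finitely supported you cannot set the extra off-diagonal entries equal to $\hat a_N=0$ without disconnecting the Jacobi matrix (and hence losing infinite support for $\mu$); the paper handles this by requiring only that those entries \emph{tend} to $0$ along $\mcn$, which your block scheme accommodates if you let the filler values in each block shrink with $k$.

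Your forward direction, however, takes a genuinely different route. The paper verifies directly, via estimates from \cite{SimonWeak}, that $-f$ is a discrete $m$-function (analytic off a bounded real interval, Herglotz in the upper half-plane, with the correct asymptotics at infinity) and then quotes the Herglotz representation theorem. You instead observe that $\Phi_{n-1}/\Phi_n=((z-\pi_nM\pi_n)^{-1})_{n,n}$ is already the Stieltjes transform of the spectral measure $\sigma_n$ of the symmetric matrix $\pi_nM\pi_n$ at $e_n$, and obtain $\nu$ as a weak limit point of the $\sigma_n$. This is more self-contained---it avoids the Herglotz machinery and the external bounds on $\mathrm{Im}(\Phi_{n-1}/\Phi_n)$---and is exactly the argument the paper uses for the unit-circle analogue (Theorem~\ref{paraclass}) and then remarks can be adapted to reprove this direction. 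What the paper's approach buys in return is an explicit analytic continuation of $f$ to all of $\bbC\setminus\bbR$ (via the bound $|f(z)|\le 1/|\mathrm{Im}\,z|$), whereas your compactness argument as stated yields the representation only for $|z|>\|\mcm\|$; of course the two agree there and the representation itself extends.
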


Before we can prove this result, we need to recall some terminology from spectral theory (many of the necessary facts can be found in \cite[Section 2.3]{Rice}).  We recall that a \textit{Herglotz function} is a function that is analytic in the upper half-plane and maps the upper half-plane to itself.  A \textit{discrete $m$-function} is defined to be a Herglotz function that can be analytically continued to $\bbC\setminus I$ for some bounded interval $I\subseteq\bbR$ and satisfies the following two properties:
\begin{enumerate}
\item[i)] the function is real valued on $\bbR\setminus I$,
\item[ii)] the function is $-z^{-1}+O(z^{-2})$ at infinity.
\end{enumerate}
The result we need is the following result known as the \textit{Herglotz Representation Theorem for Discrete $m$-functions} (see \cite[Theorem 2.3.6]{Rice}).

\begin{theorem}[\cite{Rice}]\label{HDMF}
A function $m$ that is analytic on $\{z\in\bbC:\mbox{Im}(z)>0\}$ is a discrete $m$-function if and only if there is a probability measure $\nu$ with support contained in a compact interval of the real line so that
\[
m(z)=\int\frac{d\nu(x)}{x-z}.
\]
\end{theorem}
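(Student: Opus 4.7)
The plan is to prove the two directions separately. Sufficiency is a direct verification of the four defining properties of a discrete $m$-function, while necessity rests on the classical Herglotz--Nevanlinna representation for functions analytic with positive imaginary part in the upper half-plane, refined using Stieltjes inversion together with the extra real-analyticity hypothesis off a bounded interval.

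For sufficiency, assume $m(z)=\int(x-z)^{-1}d\nu(x)$ with $\nu$ a probability measure supported in a bounded interval $I\subseteq\bbR$. Analyticity on $\bbC\setminus I$ follows by differentiation under the integral, justified by dominated convergence since $(x-z)^{-1}$ is uniformly bounded for $x\in I$ and $z$ ranging in a compact subset of $\bbC\setminus I$. The identity $\mbox{Im}((x-z)^{-1})=\mbox{Im}(z)/|x-z|^2$ shows $m$ maps the upper half-plane to itself. Real values on $\bbR\setminus I$ are immediate from the integrand being real there. Finally, for $|z|>\sup_{x\in I}|x|$, the expansion $(x-z)^{-1}=-\sum_{k\geq 0}x^kz^{-k-1}$ integrated term by term yields $m(z)=-\nu(\bbR)/z+O(z^{-2})=-1/z+O(z^{-2})$.

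For necessity, I would invoke the classical Herglotz--Nevanlinna representation: any analytic $m\colon\{\mbox{Im}(z)>0\}\to\{\mbox{Im}(z)\geq 0\}$ admits a unique representation
\[
m(z)=\alpha+\beta z+\int_{\bbR}\left(\frac{1}{x-z}-\frac{x}{1+x^2}\right)d\sigma(x),
\]
with $\alpha\in\bbR$, $\beta\geq 0$, and $\sigma$ a positive Borel measure satisfying $\int(1+x^2)^{-1}d\sigma(x)<\infty$. Stieltjes inversion identifies $\sigma$ via $\sigma((a,b))=\lim_{\varepsilon\downarrow 0}\pi^{-1}\int_a^b\mbox{Im}(m(x+i\varepsilon))\,dx$ at continuity points $a,b$. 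Since $m$ extends analytically across $\bbR\setminus I$ and is real there, $\mbox{Im}(m(x+i\varepsilon))\to 0$ uniformly on compact subsets of $\bbR\setminus I$, so $\sigma$ is supported in $I$. Compactness of this support makes $\int x(1+x^2)^{-1}d\sigma(x)$ a finite real constant that can be absorbed into $\alpha$, giving $m(z)=\alpha'+\beta z+\int(x-z)^{-1}d\sigma(x)$. Expanding at infinity yields $m(z)=\alpha'+\beta z-\sigma(\bbR)/z+O(z^{-2})$, and matching with the prescribed asymptotic $-1/z+O(z^{-2})$ forces $\beta=0$, $\alpha'=0$, and $\sigma(\bbR)=1$; setting $\nu:=\sigma$ completes the proof.

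The main obstacle is justifying that the measure $\sigma$ produced by the general Herglotz--Nevanlinna theorem is in fact supported on $I$. A clean way to do this is a Schwarz reflection argument: the assumed analytic extension of $m$ to $\bbC\setminus I$, combined with real-valuedness on $\bbR\setminus I$, implies $\overline{m(\bar z)}=m(z)$ on $\bbC\setminus I$, so boundary values from above and below coincide on $\bbR\setminus I$. Stieltjes inversion then gives zero absolutely continuous density there, while analyticity of the extension across $\bbR\setminus I$ rules out singular mass on that set; together these force $\sigma(\bbR\setminus I)=0$. The remaining asymptotic matching and absorption of the correction term into $\alpha$ are routine.
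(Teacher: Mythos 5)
The paper does not prove this statement at all: it is quoted verbatim from \cite{Rice} (Simon's book, Theorem 2.3.6) and used as a black box in the proof of Theorem \ref{oprlclass}, so there is no in-paper argument to compare yours against. That said, your proof is correct and is the standard one. The sufficiency direction is a routine verification of the four defining properties, and your computation $\mathrm{Im}\bigl((x-z)^{-1}\bigr)=\mathrm{Im}(z)/|x-z|^2$ together with the geometric expansion at infinity handles everything cleanly. For necessity, the Herglotz--Nevanlinna representation plus Stieltjes inversion is exactly the right tool: the analytic continuation of $m$ across $\bbR\setminus I$ with real boundary values forces $\mathrm{Im}\,m(x+i\varepsilon)\to 0$ locally uniformly there, so the representing measure $\sigma$ charges no open interval of $\bbR\setminus I$ (taking endpoints among the at most countably many non-continuity points is easily avoided), hence $\supp(\sigma)\subseteq I$; the linear and constant terms are then killed, and $\sigma(\bbR)=1$ is forced, by matching against the prescribed $-z^{-1}+O(z^{-2})$ asymptotics. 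Your closing remark that the Schwarz reflection identity $\overline{m(\bar z)}=m(z)$ separately rules out singular mass on $\bbR\setminus I$ is harmless but redundant, since the inversion formula applied to intervals with continuity-point endpoints already gives $\sigma(\bbR\setminus I)=0$ for the total (not merely absolutely continuous) measure. In short: no gap, and the argument is the one a reader would expect to find in \cite{Rice}.
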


\noindent Now we have what we need to prove Theorem \ref{oprlclass}.

%\vspace{2mm}

\begin{proof}[Proof of Theorem \ref{oprlclass}]
Let $\mu$ be given and let $\mcn$ be chosen so that (\ref{nrat2}) holds along this subsequence.  We will rely heavily on the results stated in \cite[Proposition 2.3]{SimonWeak}.  First of all, we see that it was proven there that
\[
\left|\frac{\Phi_{n-1}(z;\mu)}{\Phi_n(z;\mu)}\right|\leq\frac{1}{|\mbox{Im}(z)|},
\]
so that the limit $f$ from (\ref{nrat2}) is analytic all the way up to the real line (not just on $\{z:|z|>\|\mcm\|\}$).  It was also proven there that if $\mbox{Im}(z)>0$ then
\[
\mbox{Im}\left(\frac{\Phi_{n-1}(z;\mu)}{\Phi_n(z;\mu)}\right)<0.
\]
It is obvious that $\Phi_{n-1}\Phi_n^{-1}$ is real valued on the real line away from the zeros of $\Phi_n$ so the same must be true of $f$ outside the convex hull of the support of $\mu$.  Combining these facts implies $-f$ is a discrete $m$-function, so the result follows from the representation theorem.

Conversely, suppose $\nu$ is a compactly supported probability measure on $\bbR$.  Let $M(\nu)$ be the corresponding matrix representing the Bergman Shift on $L^2(\nu)$.  If $\supp(\nu)$ is infinite, then let $\{a_n'\}_{n\geq1}$ and $\{b_n'\}_{n\in\bbN}$ be respectively the corresponding sequence of off-diagonal and diagonal elements of $M(\nu)$.  Construct two bounded sequences $\{a_n\}_{n\in\bbN}$ and $\{b_n\}_{n\in\bbN}$ that admit a subsequence $\mcn$ satisfying
\[
\lim_{{\nri}\atop{n\in\mcn}}b_{n-j}=b_{j+1}',\qquad\lim_{{\nri}\atop{n\in\mcn}}a_{n-j-1}=a_{j+1}',\qquad j\in\bbN_0.
\]
It is easy to see that such sequences $\{a_n(\mu)\}_{n\in\bbN}$ and $\{b_n(\mu)\}_{n\in\bbN}$ can be constructed.  If we let $\{b_n\}_{n\in\bbN}$ be the diagonal elements of a matrix $M$ and $\{a_n\}_{n\in\bbN}$ be the off-diagonal elements of the matrix $M$ then we may take $\mu$ to be the spectral measure of $M$ and the vector $e_1$.  Then $M=M(\mu)$ and the proof of Theorem \ref{eqo} implies that the monic orthogonal polynomials for $\mu$ exhibit ratio asymptotics through the subsequence $\mcn$ when $|z|$ is sufficiently large.  If we define $\mt_n$ by
\[
\mt_n=\begin{pmatrix}
b_n & a_{n-1} & 0 & \cdots\\
a_{n-1} & b_{n-1} & a_{n-2} & \cdots\\
0 & a_{n-2} & b_{n-2} & \ddots\\
\vdots & \vdots & \ddots & \ddots\\
0 & \cdots & a_1 & b_1
\end{pmatrix},
\]
then it is easy to see that $\mt_n$ converges strongly to $M(\nu)$ as $\nri$ through $\mcn$.  Therefore, the same is true of the resolvents when $|z|$ is sufficiently large.  Therefore, when $|z|$ is sufficiently large we can write
\[
f(z)=\lim_{{\nri}\atop{n\in\mcn}}\frac{\Phi_{n-1}(z;\mu)}{\Phi_n(z;\mu)}=\lim_{{\nri}\atop{n\in\mcn}}\rho_{M,n}(z)_{n,n}=\lim_{{\nri}\atop{n\in\mcn}}\rho_{\mt_n,n}(z)_{1,1}=\int_{\bbR}\frac{1}{z-w}d\nu(w)
\]
as desired.

If $\supp(\nu)$ is finite, then $\nu$ is the spectral measure of a finite matrix $M(\nu)$ with diagonal elements $\{b_n'\}_{n=1}^N$ and off-diagonal elements $\{a_n'\}_{n=1}^{N-1}$.  Choose $\mu$ so that for some subsequence $\mcn\subseteq\bbN$
\begin{align*}
\lim_{{\nri}\atop{n\in\mcn}}M_{n-j,n-j}(\mu)&=\begin{cases}
b_{N-j}' & \mbox{ if } 0\leq j\leq N-1\\
0 & \mbox{ otherwise}
\end{cases}\\
\lim_{{\nri}\atop{n\in\mcn}}M_{n-j,n-j-1}(\mu)&=\begin{cases}
a_{N-j-1}' & \mbox{ if } 0\leq j\leq N-2\\
0 & \mbox{ otherwise}
\end{cases}
\end{align*}
(again, it is easy to see that this can be done).  Then we may proceed as above to see that the limit function $f$ is of the desired form.
\end{proof}

\vspace{2mm}

\noindent\textbf{Example.}  In $\mu$ is supported on the real line and the corresponding sequences along the diagonals of $M$ satisfy
\[
\lim_{\nri}M_{n,n}=0,\qquad\lim_{\nri}M_{n,n-1}=1,
\]
then the corresponding measure $\nu$ from the proof of Theorem \ref{oprlclass} is
\[
d\nu(x)=\frac{1}{2\pi}\chi_{[-2,2]}(x)\sqrt{x^2-4}\,dx
\]
(see \cite[Equation 1.10.3]{Rice}).  Therefore, Theorem \ref{oprlclass} implies
\[
\lim_{\nri}\frac{\Phi_{n-1}(z;\mu)}{\Phi_n(z;\mu)}=\int_{-2}^2\frac{\sqrt{x^2-4}}{z-x}\frac{dx}{2\pi}.
\]
The results of \cite{SimonWeak} tell us that this is equal to $2(z+\sqrt{z^2-4})^{-1}$.  We leave the verification to the reader.

\vspace{2mm}

One can similarly investigate the functions that occur as the limiting function in (\ref{fdef}) when the measure $\mu$ is supported on the unit circle and $n$ tends to infinity through a subsequence.  Using the Herglotz Representation for Carath\'{e}odory Functions (see \cite[Theorem 2.3.5]{Rice}), one can show that every such limit can be expressed as the appropriate transform of a measure on the unit circle.  However, obtaining a converse statement as in Theorem \ref{oprlclass} is much more challenging.  The difficulty in adapting the proof of Theorem \ref{oprlclass} to the unit circle case is that when one ``flips" the $M$ matrix as in the proof of Theorem \ref{oprlclass}, the resulting matrix does not converge weakly to the $M$ matrix of a measure on the unit circle.  The subtlety is that $M_{1,1}=\bar{\alpha}_0$ (just one Verblunsky coefficient) while $M_{n,n}=-\bar{\alpha}_{n-1}\alpha_{n-2}$ (the product of two Verblunsky coefficients).  However, if we modify the ratios we are looking at, this problem can be avoided.

If $\mu$ is supported on the unit circle, let us define the degree $n$ \textit{paraorthogonal polynomial} $\Phi_n^{(-1)}(z;\mu)$ by
\[
\Phi_n^{(-1)}(z;\mu)=z\Phi_{n-1}(z;\mu)+z^{n-1}\overline{\Phi_{n-1}(1/\bar{z};\mu)}
\]
(see \cite[Section 2.2]{OPUC1}).  Notice that
\[
\frac{\Phi_n^{(-1)}(z;\mu)}{\Phi_{n-1}(z;\mu)}=z+\frac{z^{n-1}\overline{\Phi_{n-1}(1/\bar{z};\mu)}}{\Phi_{n-1}(z;\mu)},
\]
and so one easily sees that
\[
\left\{\frac{\Phi_{n-1}(z;\mu)}{\Phi_n^{(-1)}(z;\mu)}\right\}_{n\in\bbN}
\]
is a normal family on $\{z:|z|>1\}$.  Therefore, one can always extract a subsequence along which one observes uniform convergence on compact sets.

The recursion (\ref{twoterm}) makes it clear that if $M_{(-1)}$ is the $M$-matrix for $\mu$ with $\alpha_{n-1}$ replaced by $-1$ then
\[
\Phi_n^{(-1)}(z;\mu)=\det\left(z-\pi_nM_{(-1)}\pi_n\right).
\]
This change enables us to employ the trick used in the proof of Theorem \ref{oprlclass}.  The result is the following theorem.

\begin{theorem}\label{paraclass}
If $\mu$ is a probability measure with infinite support on $\partial\bbD$ and $\mcn\subseteq\bbN$ is a subsequence so that
\begin{align}\label{nrat3}
\lim_{{\nri}\atop{n\in\mcn}}\frac{\Phi_{n-1}(z;\mu)}{\Phi_n^{(-1)}(z;\mu)}=f(z)\qquad|z|>1.
\end{align}
Then there is a probability measure $\nu$ supported on $\partial\bbD$ so that
\begin{align}\label{everyf}
f(z)=\int_{\partial\bbD}\frac{1}{z-w}d\nu(w),\qquad|z|>1.
\end{align}
Conversely, if $f$ and $\nu$ are related by (\ref{everyf}) then there is a probability measure $\mu$ supported on $\partial\bbD$ and a subsequence $\mcn\subseteq\bbN$ so that (\ref{nrat3}) holds.
\end{theorem}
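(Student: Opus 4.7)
The proof of Theorem \ref{paraclass} parallels that of Theorem \ref{oprlclass}, with the paraorthogonal modification doing the work that the self-adjointness of the Jacobi matrix did in the real-line case.

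\textbf{Forward direction.} The replacement $\alpha_{n-1}\mapsto -1$ forces $\rho_{n-1}=0$, so the subdiagonal entry exiting the $n\times n$ truncation vanishes and $U_n := \pi_n M_{(-1)}(\mu)\pi_n$ is a genuine finite unitary matrix. Applying Cramer's rule to the identity $\Phi_n^{(-1)}(z;\mu)=\det(z-U_n)$ (exactly as in the proof of Theorem \ref{eqo}) gives
\[
\frac{\Phi_{n-1}(z;\mu)}{\Phi_n^{(-1)}(z;\mu)} = \bigl\langle e_n,(z-U_n)^{-1}e_n\bigr\rangle = \int_{\partial\bbD}\frac{d\nu_n(w)}{z-w},
\]
where $\nu_n$ is the spectral measure of $U_n$ at $e_n$, a probability measure supported on at most $n$ points of $\partial\bbD$. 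The family $\{\nu_n\}_{n\in\mcn}$ is weak-$*$ precompact among probability measures on $\partial\bbD$; for any subsequential limit $\nu$, continuity of $w\mapsto 1/(z-w)$ on $\partial\bbD$ (for fixed $|z|>1$) yields $f(z)=\int d\nu(w)/(z-w)$. Expanding this Cauchy transform in powers of $z^{-1}$ reveals the moments of $\nu$, so $\nu$ is uniquely determined by $f$ and $\nu_n\to\nu$ weak-$*$ along the entire subsequence.

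\textbf{Converse direction.} Given $\nu$ related to $f$ by \eqref{everyf}, I would construct $\mu$ and $\mcn=\{n_k\}$ so that $\nu_{n_k}(\mu)\to\nu$ weak-$*$. Because finitely supported measures are weak-$*$ dense, a diagonal-extraction argument reduces the task to a \emph{single-block realization}: given a finitely supported probability measure $\sigma$ on $\partial\bbD$, produce Verblunsky coefficients $\alpha_0,\ldots,\alpha_{n-2}$ in $\bbD$ for which the $(n,n)$-spectral measure of $\pi_n M_{(-1)}\pi_n$ equals (or suitably approximates) $\sigma$. Once such single-block realizations are available, one glues them together on a rapidly increasing subsequence $\{n_k\}$, padding the intermediate Verblunsky coefficients with $0$ so that the different blocks decouple up to exponentially small errors, and lets $\mu$ be the resulting measure.

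\textbf{Main obstacle.} The author has already flagged the core difficulty: the naive flip $J_n U_n(\mu)J_n$ that worked in Theorem \ref{oprlclass} is lower rather than upper Hessenberg and so is not a GGT matrix of any OPUC measure, blocking any direct ``reverse the Verblunsky coefficients'' argument. The paraorthogonal modification rescues the strategy because $U_n(\mu)$ is genuinely unitary with $e_n$ (generically) cyclic; Gram--Schmidt applied to the Krylov sequence $\{e_n, U_n e_n, U_n^2 e_n,\ldots\}$ then produces an orthonormal basis in which $U_n(\mu)$ is upper Hessenberg with $e_n$ as its first basis vector, yielding genuine Verblunsky-like parameters $\beta_0^{(n)}(\mu),\ldots,\beta_{n-1}^{(n)}(\mu)$ (with $|\beta_{n-1}^{(n)}|=1$) that encode the spectral measure $\nu_n(\mu)$. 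The single-block realization then becomes an inverse problem: match the induced $\beta_j^{(n)}(\mu)$ to the Verblunsky coefficients of $\sigma$ by a suitable choice of $\alpha_j(\mu)$. The hard step will be establishing enough surjectivity and continuity of this inverse map to execute the block construction with controlled errors; once that is done, strong resolvent convergence on $\{|z|>1\}$ immediately yields $\nu_{n_k}(\mu)\to\nu$ weak-$*$ and hence the required limit of $\Phi_{n-1}/\Phi_n^{(-1)}$.
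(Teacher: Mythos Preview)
Your forward direction is correct and matches the paper: both express $\Phi_{n-1}/\Phi_n^{(-1)}$ as the $(n,n)$ entry of the resolvent of the unitary truncation $U_n=\pi_nM_{(-1)}\pi_n$, identify this with the Cauchy transform of the spectral measure of $(U_n,e_n)$, and pass to a weak-$*$ limit.

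Your converse, however, is substantially overcomplicated, and the ``main obstacle'' you describe is not actually present. The flip the paper uses is
\[
\bigl(\widetilde{M}_{(-1),n}\bigr)_{i,j}=\bigl(\pi_nM_{(-1)}\pi_n\bigr)_{n+1-j,\,n+1-i},
\]
i.e.\ $J_nU_n^{T}J_n$ rather than $J_nU_nJ_n$; this \emph{is} upper Hessenberg. More to the point, a direct entry-by-entry check (using the GGT formulas for $M_{n-j,n}$ with $\alpha_{n-1}$ replaced by $-1$) shows that $\widetilde{M}_{(-1),n}$ is again a paraorthogonal GGT matrix, namely the one with Verblunsky coefficients $\bar{\alpha}_{n-2},\bar{\alpha}_{n-3},\ldots,\bar{\alpha}_0$ and final coefficient $-1$. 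This reversal identity is exactly what the paragraph preceding the theorem is pointing at: the paraorthogonal modification turns the $(n,n)$ entry $-\bar{\alpha}_{n-1}\alpha_{n-2}$ into $\alpha_{n-2}$, restoring the ``single Verblunsky coefficient'' pattern needed for the flip to land back in the GGT class.

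With that identity in hand, the converse is a verbatim replay of the second half of Theorem~\ref{oprlclass} (and the paper accordingly omits the details): given $\nu$ with Verblunsky coefficients $\{\alpha_j'\}$, build a bounded sequence $\{\alpha_n\}\subset\bbD$ admitting a subsequence $\mcn$ along which $\alpha_{n-2-j}\to\bar{\alpha}_j'$ for every $j\ge 0$; take $\mu$ to be the corresponding OPUC measure. Then $\widetilde{M}_{(-1),n}$ converges strongly to the GGT matrix of $\nu$ along $\mcn$, so the $(1,1)$ resolvent entries converge to $\int(z-w)^{-1}\,d\nu(w)$, and these equal the $(n,n)$ entries of $(z-U_n)^{-1}=\Phi_{n-1}/\Phi_n^{(-1)}$. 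No Krylov/Gram--Schmidt construction, inverse problem, or surjectivity analysis is needed.
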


\begin{proof}
It is well-known that the matrix $\pi_nM_{(-1)}\pi_n$ is unitary with $n$ distinct eigenvalues.  If we define $\widetilde{M}_{(-1),n}$ by
\[
\left(\widetilde{M}_{(-1),n}\right)_{i,j}=\left(\pi_nM_{(-1)}\pi_n\right)_{n+1-j,n+1-i},\qquad i,j\in\{1,\ldots,n\},
\]
then
\[
\frac{\Phi_{n-1}(z;\mu)}{\Phi_n^{(-1)}(z;\mu)}=\left(\left(z-\pi_nM_{(-1)}\pi_n\right)^{-1}\right)_{n,n}=
\left(\left(z-\widetilde{M}_{(-1),n}\right)^{-1}\right)_{1,1}=\int_{\partial\bbD}\frac{1}{z-w}d\tilde{\nu}_n
\]
for some finitely supported probability measure $\tilde{\nu}_n$ on $\partial\bbD$.  It follows that if $\nu$ is any weak limit of the measures $\{\tilde{\nu}_n\}_{n\in\mcn}$ then
\[
\lim_{{\nri}\atop{n\in\mcn}}\frac{\Phi_{n-1}(z;\mu)}{\Phi_n^{(-1)}(z;\mu)}=f(z)=\int_{\partial\bbD}\frac{1}{z-w}d\nu(w),\qquad|z|>1,
\]
as desired.

The proof of the converse statement is very similar to the second part of the proof of Theorem \ref{oprlclass}, so we omit the details here.
\end{proof}

\noindent\textit{Remark.}  The proof of Theorem \ref{paraclass} can be adapted to produce a second proof of the first part of Theorem \ref{oprlclass}.

\vspace{2mm}

In the next section, we will leave the classical settings and discuss applications of our main results to more general measures.

\section{Measures on Jordan Regions.}\label{jordan}

We will use the term \textit{Jordan region} to refer to a region bounded by a Jordan curve.  We recall Carath\'{e}odory's Theorem (see Section 1.3 in \cite{GarnMar}), which tells us that any conformal bijection from such a region to the unit disk can be extended to a homeomorphism of the closure of the region with the closed unit disk. 

Let $\mu$ be a measure whose support is contained in the closure of some Jordan region $G$ and let $\sigma$ be any conformal bijection of $G$ with $\bbD$.  Let $\gamma_n$ be the balayage of the measure $|\varphi_n(z;\mu)|^2d\mu(z)$ onto $\partial G$.  If either of the conditions (a) or (b) of Theorem \ref{eqo} hold (in which case both hold), then we claim that the measures $\gamma_n$ converge weakly.

To see this, let $\gamma$ be any weak limit point of the measures $\{\gamma_n\}_{n\in\bbN}$.  Notice that Theorem \ref{eqo} tells us that all such weak limit points have the same moments; that is, they agree on the space of polynomials and hence they agree on the closure of this space in the $L^{\infty}(\partial G)$ norm.  By Mergelyan's Theorem (see \cite[Theorem 20.5]{Rudin}), the functions $\{\sigma^n\}_{n\in\bbN}$ are all in the closure of the space of polynomials in the $L^{\infty}(\partial G)$ norm.  Since
\[
\int_{\partial G}\overline{\sigma(z)^n}d\gamma(z)=\overline{\int_{\partial G}\sigma(z)^nd\gamma(z)},
\]
we conclude that all weak limit points agree with $\gamma$ on the closure of the set
\[
\mbox{span}\left\{\{\sigma(z)^n\}_{n\in\bbN_0}\,\bigcup\,\{\overline{\sigma(z)^n}\}_{n\in\bbN}\right\}
\]
in $L^{\infty}(\partial G)$.  By the complex Stone-Weierstrass Theorem, this is all continuous functions on $\partial G$, so $\gamma$ is the unique weak limit.

It is a separate matter to calculate the moments of the limiting measure.    Some results on this subject can be found in \cite[Chapter 9]{OPUC2} as well as \cite{SimBlob,SimaRat,SimonWeak}.  Our contribution to this effort is Theorem \ref{equib}, which we now prove.

\begin{proof}[Proof of Theorem \ref{equib}]
For every $n\in\bbN$, let us define the polynomial $H_n$ to be the polynomial part of the function $\psi_{K}^n$.  With this notation, we have
\[
\int_0^{2\pi}\psi_{K}(\eitheta)^j\frac{d\theta}{2\pi}
=\frac{1}{2\pi i}\int_{|z|=1}\frac{\psi_{K}(z)^j}{z}dz=H_j(0).
\]
Let us write
\begin{align*}%\label{psimuexpd}
\psi_{K}(w)=d_{-1}w+d_0+\frac{d_1}{w}+\frac{d_2}{w^2}+\cdots,
\end{align*}
as the Laurent expansion of $\psi_{K}$ around infinity.  It is easy to see that if we set $d_{-k}=0$ for $k>1$, then
\begin{align}\label{hjo}
H_j(0)=\sum_{{\{i_1,\ldots,i_{j}\}\subseteq\bbZ^j}\atop{\sum i_k=0}}d_{i_1}d_{i_2}\cdots d_{i_j}.
\end{align}
Meanwhile,
\begin{align}\label{bigsum}
\lim_{\nri}\int_{\bbC}z^j|\varphi_n(z;\mu)|^2d\mu(z)=\lim_{\nri}\sum_{i_1,\ldots,i_{j-1}=n-j+1}^{n+j-1}M_{n,i_1}M_{i_1,i_2}\cdots M_{i_{j-1},n}.
\end{align}
The hypotheses imply that $\kappa_n\kappa_{n+1}^{-1}\rightarrow d_{-1}>0$ as $\nri$, so Theorem \ref{whatlimit} tells us that for every $j\in\bbN_0$
\[
\lim_{\nri}M_{n-j,n}=d_j.
\]

Now, to each term in the sum in (\ref{bigsum}) there corresponds a term in the sum (\ref{hjo}) by means of the correspondence
\[
M_{n,i_1}M_{i_1,i_2}\cdots M_{i_{j-1},n}\quad\longleftrightarrow\quad d_{i_1-n}d_{i_2-i_1}\cdots d_{n-i_{j-1}}.
\]
Therefore, every term in the sum (\ref{bigsum}) converges to a term in the sum (\ref{hjo}) as $\nri$ and every such non-trivial term is attained in this way, so we have
\[
\lim_{\nri}\int_{\bbC}z^j|\varphi_n(z;\mu)|^2d\mu(z)=\int_0^{2\pi}\psi_K(\eitheta)^j\frac{d\theta}{2\pi}.
\]
\end{proof}

If $G$ is a Jordan region then the equilibrium measure for $\barg$ is defined as the push-forward of the equilibrium measure of the unit disk under the map $\psi_G$.  Consequently, Theorem \ref{equib} and our discussion above immediately imply our next result.

\begin{corollary}\label{equibcor}
Let $\mu$ be supported on the closure of a Jordan region $G$.  If
\[
\lim_{\nri}\frac{\varphi_n(z;\mu)}{\varphi_{n+1}(z;\mu)}=\frac{1}{\phi_G(z)}
\]
when $|z|$ is sufficiently large and every weak asymptotic measure is supported on $\partial G$, then the equilibrium measure for $\barg$ is the unique weak asymptotic measure.
\end{corollary}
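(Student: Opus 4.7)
The strategy is to show that every weak subsequential limit of the probability measures $\{|\varphi_n(z;\mu)|^2d\mu(z)\}_{n\in\bbN}$ coincides with $\omega_{\text{eq},\barg}$. Because these measures are all supported inside the compact set $\supp(\mu)\subseteq\barg$, Prokhorov's theorem guarantees sequential weak precompactness, so uniqueness of the limit point upgrades automatically to weak convergence of the full sequence to $\omega_{\text{eq},\barg}$.

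Let $\gamma$ be any such weak limit. By hypothesis $\supp(\gamma)\subseteq\partial G$, and because each $|\varphi_n|^2d\mu$ is a probability measure, so is $\gamma$. Theorem \ref{equib} supplies the polynomial moments
\[
\int z^j\,d\gamma(z)=\int_0^{2\pi}\psi_G(\eitheta)^j\frac{d\theta}{2\pi},\qquad j\in\bbN,
\]
and the same identity holds trivially for $j=0$. By definition $\omega_{\text{eq},\barg}=(\psi_G)_\ast(d\theta/2\pi)$ on $\partial G$, so the right-hand side equals $\int z^j\,d\omega_{\text{eq},\barg}$. Since $\gamma$ is a positive (hence real) measure, conjugation of these identities yields $\int \bar z^j\,d\gamma = \int \bar z^j\,d\omega_{\text{eq},\barg}$ for every $j\in\bbN_0$ as well.

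Consequently $\gamma$ and $\omega_{\text{eq},\barg}$ agree on the $\ast$-algebra of polynomials in $z$ and $\bar z$, which by the complex Stone--Weierstrass theorem is uniformly dense in $C(\partial G)$. Hence $\gamma=\omega_{\text{eq},\barg}$, completing the argument. There is no genuine obstacle here: the substantive analytic work has already been absorbed into Theorem \ref{equib}, and the hypothesis that every weak asymptotic measure is supported on $\partial G$ is exactly what is needed to replace the balayage/Mergelyan argument of the opening discussion of Section \ref{jordan} with a direct Stone--Weierstrass density.
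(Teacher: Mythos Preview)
Your proof is correct and follows essentially the same approach as the paper: invoke Theorem~\ref{equib} to pin down the polynomial moments of any weak limit supported on $\partial G$, then use complex Stone--Weierstrass to conclude uniqueness. The paper's argument (the discussion opening Section~\ref{jordan}) routes the density step through Mergelyan's theorem and powers of the conformal map $\sigma$, whereas you apply Stone--Weierstrass directly to polynomials in $z$ and $\bar z$; this is a modest simplification, but the substance is identical.
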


Now we can prove Corollary \ref{squareweak}.

\begin{proof}[Proof of Corollary \ref{squareweak}]
Let $\mu$ be area measure on a region $G$ that is bounded by a Jordan curve that is piecewise analytic without cusps.  We know from \cite[Theorem 1.2]{Corners} that
\[
\lim_{\nri}\frac{\varphi_n(z;\mu)}{\varphi_{n+1}(z;\mu)}=\frac{1}{\phi_G(z)}.
\]
Furthermore, it was shown in \cite[Lemma 7.6]{Corners} that if $K$ is a compact subset of $G$ then there is a constant $c_K$ so that
\[
\|\varphi_n(\cdot;\mu)\|_{L^{\infty}(K)}\leq c_Kn^{-1/2}.
\]
This implies
\[
\lim_{\nri}\int_K|\varphi_n(z;\mu)|^2d\mu(z)=0
\]
for every compact set $K\subset G$.  It follows easily that every weak asymptotic measure is supported on the boundary of $G$.  The desired conclusion now follows from Corollary \ref{equibcor}.
\end{proof}

Theorem \ref{equib} characterizes the moments of the weak asymptotic measures when the orthonormal polynomials exhibit a certain ratio asymptotic behavior. However, we have seen that in some cases, the liminf in (\ref{posinf}) converges to $0$, in which case Theorem \ref{equib} does not apply.  However, in this case we can still characterize the moments of the weak asymptotic measures.  Before we state our result, let us revisit the example from the end of Section \ref{opuc}.

\vspace{2mm}

\noindent\textbf{Example.}  Let $\mu$ be a probability measure satisfying $\supp(\mu)\subseteq\partial\bbD$ and suppose the recursion coefficients satisfy $\lim_{\nri}\alpha_n=1$.  In this case, \cite[Theorem 4.2.11]{OPUC1} tells us that the essential support of $\mu$ is $\{-1\}$.  We saw earlier that
\[
\lim_{\nri}\frac{\kappa_{n-1-j}}{\kappa_{n-1}}M_{n-j,n}=
\begin{cases}
0, & \text{if }j\neq0 \\
-1, & \text{if }j=0
\end{cases}.
\]
From this, it becomes clear that every term in the sum (\ref{bigsum}) converges to $0$ except the term corresponding to $i_1=i_2=\cdots=i_{j-1}=n$.  Therefore,
\begin{align*}
\lim_{\nri}\int_{\bbC}z^j|\varphi_n(z;\mu)|^2d\mu(z)
=\left(\lim_{\nri}M_{n,n}\right)^j
=(-1)^j.
\end{align*}
in accordance with \cite[Theorem 4.2.11]{OPUC1}.

%This example exhibits some interesting properties.  The relation (\ref{Prat}) shows that outside $\bard$, the monic orthogonal polynomials appear to be piling up zeros at $-1$, and yet  (\ref{exwam}) shows that the measure is so concentrated near $\{-1\}$ that the unique weak asymptotic measure is $\delta_{-1}$.

%\noindent\textit{Remark.}  If one knows (\ref{phigrat}) holds everywhere outside the polynomial convex hull of the support of the measure, then one could use the results of \cite{WeakCD} and regularity to obtain a shorter proof of Theorem \ref{equib}.

\vspace{2mm}

The argument in the above example can be applied more generally and yields the following result, which is reminiscent of the equivalence of parts (ii) and (iii) of \cite[Theorem 4.2.11]{OPUC1}.

\begin{theorem}\label{equib2}
Let $\mu$ be a compactly supported and finite measure.
If $\lim_{\nri}\kappa_n\kappa_{n+1}^{-1}=0$ and there is a number $x$ so that
\[
\lim_{\nri}\left(\left[\frac{z^2\Phi_n(z;\mu)}{\Phi_{n+1}(z;\mu)}-z\right]\bigg|_{z=\infty}\right)=x,
\]
then
\[
\lim_{\nri}\int_{\bbC}z^j|\varphi_n(z;\mu)|^2d\mu(z)=x^j.
\]
\end{theorem}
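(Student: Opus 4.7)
The plan is to combine the hypothesis with the machinery already developed in the proof of Theorem~\ref{eqo}. The first step is to interpret the hypothesis as a statement about the diagonal of $M$. From the Cramer's-rule identity in that proof,
\[
\frac{\Phi_n(z;\mu)}{\Phi_{n+1}(z;\mu)} = \frac{1}{z}\sum_{k=0}^{\infty}\frac{\bigl((\pi_{n+1}M\pi_{n+1})^k\bigr)_{n+1,n+1}}{z^{k}} = \frac{1}{z}+\frac{M_{n+1,n+1}}{z^{2}}+O(z^{-3}),
\]
so $\bigl[z^{2}\Phi_n/\Phi_{n+1}-z\bigr]\!\mid_{z=\infty}=M_{n+1,n+1}$ and the hypothesis reads simply $\lim_{\nri}M_{n,n}=x$. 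Meanwhile the target integral is a matrix element:
\[
\int z^{j}|\varphi_n(z;\mu)|^{2}\,d\mu = \langle \mcm^{j}\varphi_n,\varphi_n\rangle = (M^{j})_{n+1,n+1} = \sum M_{n+1,i_1}M_{i_1,i_2}\cdots M_{i_{j-1},n+1},
\]
summed over Hessenberg-compatible paths $(n+1,i_1,\ldots,i_{j-1},n+1)$ satisfying $i_{k+1}\geq i_k-1$. Because descents are by at most one while endpoints are fixed, every $i_k$ lies in $[n+1,\,n+1+j]$, and the number of nonzero summands is bounded by a function of $j$ alone.

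Next, I would apply Lemma~\ref{repeat} to decompose each such term into a product of factors of the form
\[
\frac{\kappa_{i_*-1}}{\kappa_{i^*-1}}\,M_{i_*,i^*},\qquad i_*\leq i^*,
\]
exactly as in the proof of Theorem~\ref{eqo}. Under the standing hypothesis $\kappa_n/\kappa_{n+1}\to 0$, any factor with $i_*<i^*$ vanishes in the limit, since
\[
\frac{\kappa_{i_*-1}}{\kappa_{i^*-1}} = \prod_{m=i_*-1}^{i^*-2}\frac{\kappa_m}{\kappa_{m+1}}
\]
is a product of $i^{*}-i_{*}\geq 1$ ratios each tending to $0$, while $M_{i_*,i^*}$ is bounded by $\|\mcm\|$. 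Factors with $i_*=i^*$ are diagonal entries $M_{p,p}$ with $p$ at distance at most $j$ from $n+1$, and all such entries converge to $x$ by Step~1.

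The decisive observation is that the only path whose Lemma~\ref{repeat} decomposition consists entirely of factors with $i_*=i^*$ is the constant path $(n+1,n+1,\ldots,n+1)$, which contributes $M_{n+1,n+1}^{j}\to x^{j}$. Every other path must eventually deviate from $n+1$ and then return, and the Hessenberg constraint forces the extracted cycle covering that excursion to span a nontrivial range $i_*<i^*$ of indices; hence at least one factor in its decomposition vanishes in the limit. Since the whole sum is finite, we may exchange sum and limit to conclude that $\lim_{\nri}\int z^{j}|\varphi_n|^{2}\,d\mu = x^{j}$. I expect the main place requiring attention to be precisely this final combinatorial step---verifying that no non-constant Hessenberg loop can be decomposed solely into diagonal cycles---mirroring the concrete analysis carried out for the $\alpha_n\to 1$ example immediately preceding the theorem.
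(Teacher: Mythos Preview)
Your proposal is correct and follows exactly the approach the paper intends: the paper does not write out a separate proof of Theorem~\ref{equib2} but simply says the argument in the preceding $\alpha_n\to 1$ example ``can be applied more generally,'' and your write-up is precisely that generalization, invoking Lemma~\ref{repeat} and the path decomposition from the proof of Theorem~\ref{eqo} in the same way. One small slip: the admissible indices $i_k$ lie in $[n+1-k,\,n+1+(j-k)]$ rather than $[n+1,\,n+1+j]$ (the Hessenberg constraint lets the path dip below $n+1$ as well as rise above it), but this does not affect your argument since the only use of the range is the finiteness bound, which still depends only on $j$.
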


The conclusions of Theorem \ref{equib} and Theorem \ref{equib2} also yield conclusions about the asymptotic behavior of the moments of the normalized zero counting measures as in Corollary \ref{weakzero}.

\section{Measures with Infinite Discrete Part.}\label{mass}

This section provides a proof of Corollary \ref{nvyweak} concerning measures of the form studied in \cite{Koosis}.  More specifically, we will assume $\mu$ can be written as
\[
\mu=\mu_1+\mu_2+\mu_3
\]
where $\mu_1$ satisfies $\mu_1(\bbD)=\mu_1(\bbC)$, $\mu_2$ is a measure on the unit circle of the form $w(\theta)d\theta/2\pi$ where
\[
\int_0^{2\pi}\log(w(\theta))d\theta>-\infty,
\]
and $\mu_3$ is a purely discrete measure supported on $\bbC\setminus\bard$ whose mass points $\{z_j\}_{j\in\bbN}$ satisfy the balschke condition:
\[
\sum_{j=1}^{\infty}|z_j|-1<\infty.
\]
The following theorem follows from the results in \cite{Koosis}:

\begin{theorem}[Nazarov, Volberg, $\&$ Yuditski, 2006]\label{NVY}
If $\mu$ is as above, then
\begin{enumerate}
\item
\[
\lim_{\nri}\frac{\varphi_n(z;\mu)}{\varphi_{n+1}(z;\mu)}=\frac{1}{z}
\]
when $|z|$ is sufficiently large.
\item  For every $j\in\bbN$,
\[
\lim_{\nri}\varphi_n(z_j;\mu)=0.
\]
\item
\[
\lim_{\nri}\int_{\bbC}|\varphi_n(z;\mu)|^2d\mu_1(z)=0.
\]
\end{enumerate}
\end{theorem}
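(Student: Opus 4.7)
The three parts of Theorem \ref{NVY} together constitute a Szeg\H{o}-type asymptotic analysis of the orthonormal polynomials $\varphi_n$ when $\mu$ is a Szeg\H{o} weight $\mu_2$ on the unit circle perturbed inside by an arbitrary finite measure $\mu_1$ and outside by a point-mass part $\mu_3$ whose atoms satisfy the Blaschke condition. The plan, which is essentially that carried out by Nazarov--Volberg--Yuditski in \cite{Koosis}, is to produce an explicit trial polynomial and use it to pin down $\Phi_n$ via matching upper and lower bounds on $\|\Phi_n\|_{L^2(\mu)}$.

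Concretely, I would build the trial polynomial out of (i) the outer (Szeg\H{o}) function $D(z)$ associated to the weight $w$, and (ii) a finite Blaschke-type product $B_n(z)$ whose zeros are the reflections $1/\bar z_j$ of the first $n$ exterior mass points. For part (3), the extremal property of $\Phi_n$ applied to this trial polynomial yields an upper bound on $\|\Phi_n\|_{L^2(\mu)}^2$ matching the Szeg\H{o} constant $\exp\bigl(\tfrac{1}{2\pi}\int\log w(\theta)\,d\theta\bigr)$, with a correction coming from $B_n$ at infinity that stays bounded away from $0$ and $\infty$ by the Blaschke condition. A matching lower bound follows by restricting the integral to $\partial\bbD$ and invoking classical Szeg\H{o} theory for $\mu_2$. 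The slack between these bounds is exactly $\int|\varphi_n|^2\,d\mu_1 + \int|\varphi_n|^2\,d\mu_3$, and forcing the slack to zero establishes (3) simultaneously with the decay of the point-mass contributions.

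Parts (1) and (2) then fall out of the asymptotic formula for $\Phi_n$ that the trial polynomial captures. Since $B_n$ vanishes at each $z_j$ once $n$ is large, the shape $\Phi_n \approx \kappa_n^{-1} z^n D(z)^{-1}\overline{B_n(1/\bar z)}$ forces $\Phi_n(z_j)\to 0$, which, combined with the $\kappa_n$ asymptotics extracted from (3), yields (2). For (1), dividing the corresponding asymptotic formulas for consecutive degrees isolates the factor $z^{n}/z^{n+1}=1/z$, while the Szeg\H{o} and Blaschke ratios tend to a nonzero limit thanks to $\sum_j(|z_j|-1)<\infty$.

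The principal obstacle is the sharp asymptotic underlying part (3): producing matching upper and lower bounds for $\|\Phi_n\|_{L^2(\mu)}$ requires controlling the interaction among all three pieces of $\mu$ at once, and this is where the machinery of outer functions, Blaschke products, and harmonic-measure estimates enters decisively. Once this sharp Szeg\H{o}-type estimate is in hand, statements (1) and (2) are comparatively quick corollaries of the same explicit asymptotic description of $\Phi_n$.
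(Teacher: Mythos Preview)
The paper does not prove Theorem \ref{NVY} at all: it is stated as a quotation of the results in \cite{Koosis} and then used as a black box to deduce Corollary \ref{nvyweak} via Corollary \ref{equibcor}. So there is no ``paper's own proof'' to compare against; your proposal is an attempt to reconstruct the Nazarov--Volberg--Yuditski argument itself, which lies outside the scope of this paper.

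That said, your outline is in the right spirit but somewhat garbled in the details. The trial object should be a monic polynomial of degree $n$, so writing it as $z^n D(z)^{-1}\overline{B_n(1/\bar z)}$ with $B_n$ a Blaschke product is not quite right: $D(z)^{-1}$ is not a polynomial, and one has to be more careful about how the Szeg\H{o} function and the Blaschke factor for the exterior points enter (in \cite{Koosis} the key step is an application of the Koosis theorem, a Beurling--Malliavin-type estimate, rather than a direct trial-polynomial construction). Also, your placement of the Blaschke zeros at the reflections $1/\bar z_j$ would make the trial function vanish inside $\bbD$, not at the mass points $z_j$ outside, so part (2) would not follow from that shape. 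The correct mechanism is that the Blaschke product associated to the $z_j$ (viewed in the exterior of $\bbD$) governs the decay of $\varphi_n(z_j)$, and the Blaschke condition $\sum(|z_j|-1)<\infty$ is what makes the infinite product converge and the leading-coefficient asymptotics stabilize. Your high-level decomposition into ``sharp two-sided Szeg\H{o} estimate $\Rightarrow$ slack tends to zero $\Rightarrow$ (3), then (1) and (2) as corollaries'' is accurate as a summary of \cite{Koosis}, but the execution there is more delicate than a single trial-polynomial computation.
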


Theorem \ref{NVY} easily implies that every weak asymptotic measure is a measure on $\partial\bbD$.  We can now apply Corollary \ref{equibcor} to conclude that the measures $\{|\varphi_n|^2d\mu\}_{n\in\bbN}$ converge weakly to normalized arc-length measure on the unit circle as $\nri$.

\vspace{7mm}

\vspace{7mm}

\noindent Brian Simanek, Vanderbilt Department of Mathematics,

\noindent\texttt{brian.z.simanek@vanderbilt.edu}

\end{document}